\numberwithin{equation}{section}
\newtheorem{Theorem}{Theorem}[section]
\newtheorem{Prop}[Theorem]{Proposition}
\newtheorem{Cor}[Theorem]{Corollary}
\newtheorem{Lem}[Theorem]{Lemma}
\theoremstyle{definition}
\newtheorem{defn}[Theorem]{Definition}
\newtheorem{Rmk}[Theorem]{Remark}
\newcommand{\N}{\mathbb{N}}
\newcommand{\R}{\mathbb{R}}
\newcommand{\K}{\mathbb{K}}
\newcommand{\cl}[1]{\overline{#1}}
\newcommand{\ball}[2]{\operatorname{\mathrm{B}}_{#2}\left(#1\right)}
\newcommand{\cball}[2]{\cl{\ball{#1}{#2}}}
\newcommand{\dvg}{\operatorname{\mathrm{div}}\,}
\renewcommand{\ge}{\varepsilon}
\renewcommand{\rho}{\varrho}
\newcommand{\bn}{B_{\alpha_n}}
\newcommand{\psa}{\Psi_{\e,\a}}
\renewcommand{\a }{\alpha }
\renewcommand{\b }{\beta }
\renewcommand{\d }{\delta }
\newcommand{\e }{\varepsilon}
\newcommand{\g }{\gamma}
\newcommand{\ie }{I_\e }
\newcommand{\m }{\mu }
\renewcommand{\O }{\Omega }
\newcommand{\nv }{\|_{V'}}
\newcommand{\fn }{f_{\a_n}}
\newcommand{\w}{W^{1,p}_0(\Omega)}
\newcommand{\h}{W^{1,2}_0(\Omega)}
\newcommand{\pt}{ \mathcal{P}_t}
\newcommand{\jn}{J_{\varepsilon,\alpha_n}}
\newcommand{\1}{ 2\mathcal{P}_1(\O) -1}
\newcommand{\nb }{\overline{n}}
\begin{document}

	\title[Multiple positive solutions for Benci-Cerami problem]
	{Multiple positive solutions for a $p$-Laplace Benci-Cerami type problem ($1<p<2$), via Morse theory}
	%\date{\today}
	%\date{}

	\author{Giuseppina Vannella}
	\address{\textit{Dipartimento di Meccanica, Matematica e Management\\ Politecnico di Bari\\ Via Orabona 4\\ 70125 Bari, Italy}}
	\email{giuseppina.vannella@poliba.it}

	\keywords{$p$-Laplace equations, perturbation results, Morse theory, critical groups.}

	\subjclass[2010]{58E05, 35J60, 35J92, 35B20} %AMSART

\begin{abstract}
	Let us consider the quasilinear problem
	\[ (P_\varepsilon) \ \ \left\{
	\begin{array}{ll}
	- \ge^p \Delta _{p}u  +  u^{p-1} = f(u)
	& \hbox{in} \ \Omega \\
	u>0 & \hbox{in} \ \Omega \\
	u=0 & \hbox{on} \ \partial \Omega
	\end{array}
	\right.
	\]
	where $\Omega$ is a bounded domain in $\R^N$ with smooth boundary,
	$N\geq 2$, $1< p < 2$, $\ge >0$ is a parameter and $f: \R \to \R$ is a continuous
	function with $f(0)=0$, having a subcritical growth.
	We prove that there exists $\ge^* >0$ such that, for every $\ge \in (0,
	\ge^*)$, $(P_\ge)$ has at least $2{\mathcal P}_1(\Omega)-1$
	solutions, possibly counted with their multiplicities, where
	${\mathcal P}_t(\Omega)$ is the Poincar\'e polynomial of $\Omega$.
	Using Morse techniques, we furnish an interpretation of the
	multiplicity of a solution, in terms of positive distinct solutions
	of a quasilinear equation on $\Omega$, approximating $(P_\ge)$.
\end{abstract}

\keywords{$p$-Laplace equations; perturbation results; Morse theory; critical groups.}
\thanks
	{The author is partially supported by PRIN 2017JPCAPN Qualitative and quantitative aspects of nonlinear PDEs (MIUR), by the group GNAMPA of INdAM and by FRA2019 of Politecnico di Bari.}

\maketitle

\smallskip

\section{Introduction}

Let us consider the quasilinear elliptic problem
\[ (P_\ge) \ \ \left\{
\begin{array}{ll}
 - \ge^p \Delta _{p}u + u^{p-1} = f(u)
 & \hbox{in} \ \Omega \\
 u>0 & \hbox{in} \ \Omega \\
u=0 & \hbox{on} \ \partial \Omega
\end{array}
\right.
\]
where $\Omega$ is a bounded domain in $\R^N$ with smooth boundary,
 $N\geq 2$,  $1<p <2$,
$\ge >0$ is a parameter and $f: \R \to \R$ is a continuous
function with $f(0)=0$, having a subcritical growth.

In \cite{bencicerami}  Benci and Cerami studied $(P_\ge)$ for $p=2$, proving via Morse theory that the 
number of solutions to $(P_\ge) $ is related to the topology of $\O $.
In \cite{cvjde}    
the previous result was extended to the case $\nobreak{2\leq p<N}$.
In both cases it was proved that $(P_\ge)$ has at least $\1$ solutions, 
counted with their multiplicities (see Definition~\ref{poincare}).

\noindent Let us denote by $I_\varepsilon$ the energy functional of $(P_\ge)$.

When $p=2$, $I_\varepsilon$ is defined on the Hilbert space $\h$, so that the multiplicity of a solution $u_0$ is exactly one if $u_0$ is a nondegenerate critical point of $I_\varepsilon$, i.e. if $I_\varepsilon''(u_0)$ is an isomorphism. Moreover the 
nondegeneracy condition is generally verified, thanks to the celebrated result proved by Marino and Prodi \cite{MarPro}. 

When $p\neq 2$, as $I_\varepsilon$ is defined on $\w$ which is a Banach space,
a lot of difficulties arise in order to relate hessian notions to topological objects.
In fact, it is not clear what can be a reasonable definition of nondegenerate critical point, as it makes no sense to require that the second derivative of the energy functional in a critical
point is invertible, since a Banach space, in general (and $\w$ in particular), is not isomorphic to its dual space.
Furthermore it can be proved that 
$I_\varepsilon''(u_0)$ can not be even a Fredholm operator and Marino-Prodi perturbation type
results \cite{MarPro} do not hold (see also
\cite{Uhlen, chang1, CV} for further details). 
The multiplicity result in \cite{cvjde}, where $p\geq 2$, are proved exploiting critical groups estimates in the spirit of differential Morse relation, using a new definition in which a critical point is nondegenerate if the second derivative of the energy functional is injective (see \cite{CV}).

Moreover in \cite{cvjde}
a further perturbation result was proved, showing that $(P_\ge)$ is always close to a differential problem having at least $\1$ distinct positive solutions, which is an interpretation of the notion of the multiplicity of each solution to~$(P_\ge)$.

In this work we consider $(P_\ge)$ when  $p\in (1,2)$, which brings  additional delicate difficulties.
In fact, we can see that $u\in \w \mapsto \int_{\O}|\nabla u|^p\, dx \in \R$ is not $C^2$, thus also the energy functional $I_\varepsilon$  is not twice differentiable. 
Moreover, as $p<2$, even the nonlinearity $f$ could not be $C^1$ (see Remark~\ref{nonc1}), so that this further problem should also be managed.
Despite these difficulties, we extend the previous results when $1<p<2$, preserving the 
generality of a quite large class of nonlinearities $f$.
In order to do that, we build a convenient $C^1$ perturbation of $f$ and a class of problems approximating $(P_\ge)$, so that the corresponding energy functionals
are arbitrarly close to $I_\varepsilon$, according to a suitable norm (see Lemma~\ref{vicino}).

In this work we take advantage of recent results proved in \cite{cdv}, introducing some bilinear forms defined on a Hilbert space, which are inspired by the formal second derivatives of the approximating functionals.

The critical case of the problem, introduced by Brezis-Nirenberg \cite{brezisnirenberg} in the semilinear case  $p=2$ and  extended to the quasilinear case  $p\neq 2$ by Azorero-Peral \cite{AP1, AP2} and  Guedda-Veron \cite{gueddaveron}, was studied in \cite{cvbn} for $p\in (1,2)$,  where we proved a multiplicity result when $f$ is a homogeneous critical nonlinearity, so that the $(P.S.)$ condition at any level fails.

\bigskip

In this work, denoting by $p^*=\frac{Np}{N-p}$, we assume that 
$f\in C^0(\R)\cap C^1(\R\setminus \{0\})$ satisfies the following conditions:

\medskip

\begin{itemize}
	\item{$(f_1)$}
	there exists $q \in (p, p^*)$ such that
	\[\frac{d}{dt}\ \frac{f(t)}{t^{q-1}}<0 \quad 
	\forall \, t >0; \]
	
	\smallskip
	
	\item{$(f_2)$}
	there exists $\theta \in (0, 1/p)$ such that
	\[
	F(t) \leq \theta t f(t) \quad 
	\forall \, t \geq 0
	\]
	where $F(t)= \int_0^t f(s) \, ds$;
	
	\smallskip
	
	\item{$(f_3)$}
	${\displaystyle \frac{d}{dt}\ \frac{f(t)}{t^{p-1}}>0}$ \ \, 
	$\forall \,t>0$;
	
	\smallskip
	
	\item{$(f_4)$}
	${\displaystyle\lim_{\ t \to 0^+} t^{2-p} f'(t) =0}$;
	
	\smallskip
	
	\item{$(f_5)$}
	$f(t)=0$ 
	$\ \, \forall \, t <0$.
\end{itemize}
\bigskip

Continuity of $f$ and assumption $(f_5)$ give that $f(0)=0$.

\begin{Rmk}\label{nonc1} 
We observe that the functions satisfying the previous assumptions may not be $C^1$ in $0$.
For example, this is the case for $\ f(t)=(t^+)^{r-1}\!$, where $p<r<\min \{2,p^*\}$. In particular, note that if $N\geq 3$ and $p\in (1,\frac{2N}{N+2})$, then  $p^*<2$.
\newline Conversely, if we assume 
that $f$ is $C^1$ on $\R$, then $f'(0)=0$, which directly gives $(f_4)$.
\end{Rmk}

\begin{Rmk}\label{esempif} 
	Let us note that the assumptions $(f_1)-(f_5)$ are satisfied also by non~homogeneous functions.
	For instance, we may think of
	\[f(t) =a_1(t^+)^{r_1-1}+a_2(t^+)^{r_2-1}+\dots a_m(t^+)^{r_m-1}\]
	
	\noindent where $a_1,a_2, \dots a_m >0$ and $p<r_1<r_2<\dots r_m <p^*$.
	
	\medskip 
	
	\noindent Another example is given by
	\[f(t)=	\frac{d}{dt} \, \bigl((t^+)^r \log(a+t^+)\bigr)\]
	
	\noindent where $r\in (p, p^*)$ and $a$ is big enough. 
\end{Rmk}

\bigskip

\bigskip

In this work, inspired by the ideas in \cite{bencicerami}, we want to prove multiplicity results, related to the topology of $\O $. 
In order to estabilish the first one, we denote by $\textit{cat}_\O (\O)$ the Ljusternick--Schrnirelmann category of $\O $ in itself.

\begin{Theorem}\label{cat}
	If $\textit{cat}_\O (\O)>1$, there
	exists $\ge^*>0$ such that, for every $\ge\in (0,\ge^*)$, $(P_\ge)$
	has at least $\textit{cat}_\O (\O)+1$ distinct solutions.
\end{Theorem}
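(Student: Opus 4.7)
The plan is to adapt the Benci-Cerami variational approach via Ljusternik-Schnirelmann category to the present setting, working on the Nehari manifold of $I_\varepsilon$ (or of a suitable $C^1$ approximation $I_{\varepsilon,\alpha}$ provided by the perturbation construction announced via Lemma~\ref{vicino}). Let $\mathcal{N}_\varepsilon=\{u\in W^{1,p}_0(\Omega)\setminus\{0\}:\langle I_\varepsilon'(u),u\rangle =0\}$. Under hypotheses $(f_1)$--$(f_5)$, standard arguments give that $\mathcal{N}_\varepsilon$ is a $C^1$ manifold radially diffeomorphic to the unit sphere of $W^{1,p}_0(\Omega)$ (hence contractible), that $I_\varepsilon$ is bounded below and coercive on $\mathcal{N}_\varepsilon$, and satisfies (P.S.) on $\mathcal{N}_\varepsilon$ thanks to the subcritical condition in $(f_1)$.

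Next I would perform the concentration analysis relating $I_\varepsilon$ on $\Omega$ with the limit problem $-\Delta_p u+u^{p-1}=f(u)$ on $\R^N$. Let $m_\infty>0$ denote the ground state level of the latter, attained by a radial, positive, exponentially decaying ground state $U$. For $y\in\Omega$ construct a concentration map $\Phi_\varepsilon:\Omega\to\mathcal{N}_\varepsilon$ by truncating and rescaling $U$ around $y$, and normalizing to land on $\mathcal{N}_\varepsilon$. A direct rescaling shows $\varepsilon^{-N}I_\varepsilon(\Phi_\varepsilon(y))\to m_\infty$ uniformly in $y$. Conversely, using a barycenter map $\beta:I_\varepsilon^{\,c_\varepsilon}\cap\mathcal{N}_\varepsilon\to\Omega_\delta$ (a small $\delta$-neighborhood of $\Omega$), a concentration-compactness argument shows that for any $\eta>0$ there exists $\varepsilon^*>0$ such that for $\varepsilon<\varepsilon^*$, every $u\in\mathcal{N}_\varepsilon$ with $\varepsilon^{-N}I_\varepsilon(u)\le m_\infty+\eta$ satisfies $\beta(u)\in\Omega_\delta$, and moreover $\beta\circ\Phi_\varepsilon$ is homotopic to the inclusion $\Omega\hookrightarrow\Omega_\delta$. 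Thus
\[
\textit{cat}_{I_\varepsilon^{\,c_\varepsilon}\cap\mathcal{N}_\varepsilon}\!\bigl(I_\varepsilon^{\,c_\varepsilon}\cap\mathcal{N}_\varepsilon\bigr)\ \ge\ \textit{cat}_\Omega(\Omega),
\]
and by Ljusternik-Schnirelmann theory on $\mathcal{N}_\varepsilon$ we obtain $\textit{cat}_\Omega(\Omega)$ critical points of $I_\varepsilon$ in the sublevel $I_\varepsilon^{\,c_\varepsilon}$, which correspond to positive solutions of $(P_\varepsilon)$ since $(f_5)$ ensures the Nehari manifold consists of nonnegative functions (after truncation) and the strong maximum principle upgrades them to positive ones.

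To produce the additional solution, I would argue by contradiction: assume the only critical points are the $\textit{cat}_\Omega(\Omega)$ ones already found at level $\le c_\varepsilon$. Since $\mathcal{N}_\varepsilon$ is contractible while $I_\varepsilon^{\,c_\varepsilon}\cap\mathcal{N}_\varepsilon$ has category $\ge\textit{cat}_\Omega(\Omega)\ge 2$, the inclusion $I_\varepsilon^{\,c_\varepsilon}\cap\mathcal{N}_\varepsilon\hookrightarrow\mathcal{N}_\varepsilon$ cannot be a homotopy equivalence; however, by deformation along the (P.S.) flow (suitably constructed in the non-$C^2$ setting via the $C^1$ approximation and a pseudo-gradient argument), the absence of further critical points would force such a deformation retract, yielding a contradiction and producing a higher-energy solution.

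The hard part will be the deformation lemma and (P.S.) argument in the non-smooth framework $1<p<2$: since $I_\varepsilon$ fails to be $C^2$ and $f$ need not be $C^1$ at $0$, the standard flow-based tools cannot be applied directly. This is the step where I would heavily rely on the approximating functionals close to $I_\varepsilon$ provided by Lemma~\ref{vicino}, proving the category estimate on each approximation $I_{\varepsilon,\alpha}$ uniformly in $\alpha$, and then passing to the limit $\alpha\to 0$ using the convergence of critical points, together with the quantitative sublevel comparison guaranteed by Lemma~\ref{vicino}. The barycenter/concentration estimates are robust under such approximation, which is what makes this scheme viable.
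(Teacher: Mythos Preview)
Your approach is essentially the paper's: work on the Nehari manifold, use concentration/barycenter maps to compare a low sublevel with $\Omega$, obtain the category estimate, apply Ljusternik--Schnirelmann theory for $\textit{cat}_\Omega(\Omega)$ solutions, and exploit contractibility of the Nehari manifold for the extra one. The paper packages the concentration/barycenter step into Proposition~\ref{map} (quoting \cite{cvjde}), but the skeleton is identical.

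One correction, however: you overestimate the difficulty of the deformation step. The functional $I_\varepsilon$ is $C^1$ on $W^{1,p}_0(\Omega)$, and the paper shows directly (Lemma following Lemma~\ref{diff}, together with Corollary~\ref{lps} for $\alpha=0$, $h=0$) that $I_\varepsilon$ and its restriction to $\Sigma_\varepsilon(\Omega)$ satisfy (P.S.). Classical Ljusternik--Schnirelmann theory requires nothing more: the deformation lemma is built from pseudo-gradient flows and needs only $C^1$ regularity, not $C^2$. The approximating functionals of Lemma~\ref{vicino} are not used for Theorem~\ref{cat} at all --- they enter only in the Morse-theoretic perturbation result (Theorem~\ref{finale}), where genuine second-order information is needed to compute critical groups. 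Invoking them here is an unnecessary detour.

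A minor imprecision: the Nehari manifold is not radially diffeomorphic to the full unit sphere but to the sphere minus the set of a.e.\ nonpositive functions (since $f\equiv 0$ on $(-\infty,0]$, the fibering map $t\mapsto I_\varepsilon(tv)$ has no positive critical point when $v\le 0$); see Lemma~\ref{diff}. This set is still contractible, so your argument for the extra solution survives unchanged.
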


\bigskip

In order to state the following result, which will be proved exploiting Morse Theory, let us recall a classical topological definition.

\begin{defn}\label{morsepoly}
	Let $\K$ be a field. For any pair of topological spaces $(A,B)$ with $B\subset A$, we denote by ${\mathcal 
		P}_t(A,B)$ the Poincar\'e polynomial of 
	$(A,B)$, defined as 
	\[
	{\mathcal 
		P}_t(A,B)= \sum_{q=0}^{+\infty} \dim H^q (A,B)
	\, t^q
	\]
	where  $H^{q}(A,B)$ stands for the $q$-th Alexander-Spanier
	relative cohomology group of $(A,B)$, with coefficients in $\K$; we also define the Poincar\'e polynomial of $A$ 
	 as $${\mathcal P}_t(A)={\mathcal P}_t(A,\emptyset).$$
\end{defn}

\bigskip

\begin{Theorem}\label{aim}
There exists $\ge^*>0$ such that, for every $\ge\in (0,\ge^*)$, $(P_\ge)$
has at least $2\,{\mathcal P}_1(\O)-1$ solutions, possibly counted
with their multiplicities.
\end{Theorem}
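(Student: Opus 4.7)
The plan is to carry out the Morse-theoretic argument of Benci--Cerami \cite{bencicerami}, with two substantial modifications dictated by $1<p<2$: the energy functional $\ie$ associated with $(P_\e)$ is only $C^1$ on $\w$, and $f$ itself need not be $C^1$ at the origin. The starting point is to introduce, for each small $\a>0$, a $C^1$ perturbation $f_\a$ of $f$ and a corresponding regularized functional $\jn$, as announced in Lemma~\ref{vicino}, arranged so that the $\jn$ are close to $\ie$ in a suitable norm and so that their formal second derivatives are realized by the Hilbert-space bilinear forms of \cite{cdv}.

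Fix $\a,\e>0$ small. First I would verify that $\jn$ satisfies the Palais--Smale condition below a suitable threshold, which is standard from $(f_1)$--$(f_5)$ and the subcritical growth of $f_\a$. Next I would reproduce the Benci--Cerami topological construction in this approximated setting: pick a reference level $m_\e$ just above the least energy of solutions concentrated at an interior point of $\O$; define a concentration map $\Phi_\e:\O\to\jn^{m_\e}$ by rescaled ``bubbles'' centered at each $x\in\O$, and a barycenter map $\b$ on the relevant sublevel of $\jn$ such that $\b\circ\Phi_\e$ is homotopic to the identity of $\O$. Combined with the standard contractibility argument at high energy, this yields, for appropriate $a<b$, the Poincar\'e polynomial estimate
\[
\mathcal P_t\bigl(\jn^{b},\jn^{a}\bigr)\;\geq\;(1+t)\,\mathcal P_t(\O)-1,
\]
which is precisely the input required for Morse-type inequalities.

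Then I would invoke the critical-group framework developed in \cite{CV} and adapted to the present setting in \cite{cdv}: with the weaker notion of nondegeneracy asking only injectivity of the bilinear form associated with $\jn''(u_0)$, the Morse polynomials $i_t(u,\jn):=\sum_q\dim C^q(\jn,u)\,t^q$ are well defined and the Morse identity
\[
\sum_{u\text{ critical of }\jn} i_t(u,\jn)\;=\;\mathcal P_t\bigl(\jn^{b},\jn^{a}\bigr)+(1+t)\,Q(t)
\]
holds with $Q\in\mathbb N[t]$. Evaluating at $t=1$ and using the previous estimate, $\jn$ must possess at least $2\,\mathcal P_1(\O)-1$ critical points, counted with multiplicities $i_1(u,\jn)$.

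Finally, I would let $\a\to 0^+$. Lemma~\ref{vicino} together with subcritical compactness ensures that each sequence of critical points of $\jn$ with bounded energy admits a subsequence converging in $\w$ to a critical point of $\ie$, with the sum of multiplicities lower semicontinuous under this passage; this delivers at least $2\,\mathcal P_1(\O)-1$ solutions of $(P_\e)$ counted with their multiplicities, thus proving Theorem~\ref{aim}. I expect the main obstacle to lie in the third step: implementing the critical-group calculus and Morse splitting for $\jn$ in the absence of twice differentiability of the energy, which will require the bilinear-form machinery of \cite{cdv} to take the place of a classical Hessian and the verification that injectivity is stable enough to survive the passage $\a\to 0^+$.
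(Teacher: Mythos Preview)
Your plan takes a substantial detour that the paper avoids entirely. The paper's proof of Theorem~\ref{aim} works \emph{directly} with $\ie$ and never touches the approximating functionals $J_{\e,\a}$; those are introduced only in Section~3 and are used exclusively for Theorem~\ref{finale}, where one actually needs to \emph{compute} critical groups via a Morse index. The point you are missing is that the topological Morse theory invoked here---critical groups as in Definition~\ref{gruppi}, the Morse polynomial of Definition~\ref{poincare}, and the relation of Theorem~\ref{morse-topl}---requires only that the functional be $C^1$ and satisfy $(P.S.)$; no Hessian, no Fredholm property, no Morse splitting enters. The Benci--Cerami barycenter construction and the resulting Poincar\'e-polynomial identities of Proposition~\ref{ab} are carried out for the sublevels of $\ie$ itself (the Nehari manifold $\Sigma_\e(\O)$ is $C^1$ by Lemma~\ref{diff}, which only uses that $A_\e$ is $C^1$). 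The count $2\,\mathcal P_1(\O)-1$ then falls out by applying Theorem~\ref{morse-topl} separately to the pairs $(I_\e^\a,I_\e^c)$ and $(\w,I_\e^\a)$ and summing at $t=1$.

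Your final step also contains a genuine gap. You assert that under $\a\to 0^+$ ``the sum of multiplicities [is] lower semicontinuous'', so that a lower bound on $\sum_u i(J_{\e,\a},u)(1)$ transfers to $\sum_u i(\ie,u)(1)$. But the perturbation principle governing this passage (Theorem~\ref{inter}) gives the inequality in the \emph{opposite} direction: if $u$ is an isolated critical point of $\ie$ and $g$ is $C^1$-close, then $\sum_j i(g,u_j)(1)=i(\ie,u)(1)+2Q(1)\geq i(\ie,u)(1)$. Hence knowing that $J_{\e,\a}$ has large total multiplicity does not by itself force $\ie$ to. You could try to repair this by applying Theorem~\ref{inter} with the roles reversed, but that requires $J_{\e,\a}$ to have only isolated critical points, which you have not arranged. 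The clean fix is to transfer the \emph{sublevel-set topology} (not the critical points) from $J_{\e,\a}$ to $\ie$ via $C^1$-closeness and then run the Morse relation on $\ie$---at which point the approximation has bought nothing and you are back to the paper's direct argument.
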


\bigskip

The definition of multiplicity of a solution is given in Definition~\ref{poincare}.
 Note that, as showed in \cite{bencicerami},  $2\,{\mathcal P}_1(\O)-1$ is bigger than $\textit{cat}_\O (\O)+1$, if we assume that $\O $ is topologically rich. However the last theorem, proved applying a topological version of Morse theory, does not guarantee the existence of $2\,{\mathcal P}_1(\O)-1$ distinct solutions, so it is crucial to understand more deeply what the notion of multiplicity of a solution means.
 Indeed we prove here that there is a sequence of quasilinear problems approaching $(P_\e)$, each of them having at least $2\,{\mathcal P}_1(\O)-\!~\!1$ distinct solutions, which are close to the solutions of  $(P_\e)$. 

\noindent More precisely, we prove the following
perturbation result, in which we say that $\partial \Omega$ satisfies the
interior sphere condition if for each $x_0 \in
\partial \Omega$ there exists a ball $\ball{x_1}{R} \subset \Omega$ such
that $\cball{x_1}{R} \cap
\partial \Omega =\{x_0 \}$.

\begin{Theorem}\label{finale}
Assume that $\partial \Omega$ satisfies the interior sphere
condition.
There exists $\ge^*>0$ such that, for every $\ge\in (0,\ge^*)$, either
$(P_\ge)$ has at least $2\,{\mathcal P}_1(\O)-1$ distinct solutions
or, for every $\a_n\to 0^+$, there
exist a sequence $f_{\a_n}$ suitably approximating $f$ and a sequence $h_n \subset C^1(\overline \O)$ with
$\|h_n\|_{C^1\!(\overline{\O})} \to 0$ such  that problem
\[ \ (P_n)\ \left\{
\begin{array}{ll}
 - \ge^p div \bigl( (|\nabla u|^2 + \alpha_n)^{(p-2)/2} \nabla u \bigr)
 +
 u\,(\a _n+u^{2})^{(p-2)/2
 }&  \\
  
   =\fn (u) +h_n
 & \hbox{in} \ \Omega \\

 u>0 & \hbox{in} \ \Omega \\
u=0 & \hbox{on} \ \partial \Omega
\end{array}
\right. 
\]
has at least $2\,{\mathcal P}_1(\O)-1$ distinct solutions,   
for $n$ large enough.
\end{Theorem}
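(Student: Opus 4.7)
The plan is to start from the conclusion of Theorem~\ref{aim}: for every $\e\in(0,\e^*)$, $(P_\e)$ admits solutions $u_1,\ldots,u_k$ whose multiplicities in the sense of Definition~\ref{poincare} sum to at least $2\,\mathcal{P}_1(\O)-1$. If $k\geq 2\,\mathcal{P}_1(\O)-1$ the first alternative of the statement is immediate, so I would assume that at least one $u_j$ has multiplicity $m_j\geq 2$. The task then reduces to splitting each such critical point into as many genuinely distinct solutions of a nearby regularized problem as its multiplicity prescribes, by moving to a framework where classical $C^2$ Morse theory on a Hilbert space applies.

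Given any $\a_n\to 0^+$, I would first construct the $C^1$ approximation $\fn$ of $f$ and the corresponding energy functional $\jn$ attached to $(P_n)$ with $h_n\equiv 0$. The regularization $(|\n u|^2+\a_n)^{(p-2)/2}$ removes the degeneracy of the $p$-Laplacian and, combined with $\fn\in C^1$, renders $\jn$ twice differentiable. By Lemma~\ref{vicino} the functionals $\jn$ approach $\ie$ in a norm adapted to the variational setting, and by the bilinear-form framework of \cite{cdv} their Hessians act as Fredholm operators on a suitable Hilbert space $V$. Invoking the critical-group estimates already used to prove Theorem~\ref{aim} together with this approximation, each solution $u_j$ of $(P_\e)$ yields, for $n$ large, a compact cluster $K_n^j$ of critical points of $\jn$ whose aggregate critical groups retain total rank $m_j$.

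At this stage the solutions in $K_n^j$ may still coincide, so I would apply a Marino--Prodi-type perturbation: add the affine term $-\int_\O h_n u\,dx$ to $\jn$ with $h_n\in C^1(\overline\O)$ chosen so that $\|h_n\|_{C^1(\overline\O)}\to 0$ and the perturbed $C^2$ functional has only nondegenerate critical points in a neighbourhood of each $K_n^j$. Since a nondegenerate critical point of a $C^2$ Morse functional contributes critical groups concentrated in a single degree with rank $1$, the cluster $K_n^j$ has to split into at least $m_j$ distinct nondegenerate critical points. The interior sphere condition on $\p\O$ provides Hopf-type lower barriers and uniform $C^{1,\b}(\overline\O)$ estimates, which guarantee that these critical points correspond to positive solutions of $(P_n)$. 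Summing over $j$ produces at least $2\,\mathcal{P}_1(\O)-1$ distinct solutions of $(P_n)$ for $n$ sufficiently large.

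The main obstacle is the stability of critical groups through the two-layer approximation, that is, regularizing both the leading operator and the nonlinearity, and then injecting the small $C^1$ forcing. Because $\ie$ is not $C^2$ and $\ie''$ is not Fredholm, no direct Banach-space perturbation theorem is available; one must argue via the Hessian bilinear forms of \cite{cdv}, exploit the uniform elliptic regularity enforced by the interior sphere condition, and select $h_n$ generically in a $C^1$-small neighbourhood of zero. Coordinating the quantitative closeness $\jn\to\ie$ with the genericity required by Marino--Prodi, while keeping positivity of the perturbed critical points, is the delicate technical heart of the proof.
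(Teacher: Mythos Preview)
Your overall architecture matches the paper's: invoke Theorem~\ref{aim}, assume fewer than $2\mathcal{P}_1(\O)-1$ distinct solutions of $(P_\e)$, pass to the regularized functionals $\jn$, transfer the multiplicity count via Theorem~\ref{inter}, then perturb by a small $h_n$ so that every critical point becomes ``nondegenerate'' and hence has multiplicity one by Theorem~\ref{cornerstone}. The positivity argument via the interior sphere condition and the Hopf/Vazquez principle is also the right closing step. The gap is in how you execute the perturbation.

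You assert that the regularization ``renders $\jn$ twice differentiable'' and then appeal to a Marino--Prodi-type perturbation. This is exactly what fails here: even for $\a_n>0$ the map $u\mapsto\int_\O(\a_n+|\n u|^2)^{p/2}$ is only $C^1$ on $\w$ when $p<2$, because the formal second derivative is the bilinear form $B_{\a_n}(u)$, which is bounded on $W^{1,2}_0(\O)$ but not on $\w$. So $\jn$ is not $C^2$ on the energy space, its Hessian is not a bounded operator from $\w$ to its dual, and a classical Marino--Prodi argument is unavailable --- the paper flags precisely this obstruction in the introduction. Saying ``choose $h_n$ generically'' is not a proof until you specify on which space the genericity (Sard) argument is being run.

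What the paper actually does is a finite-dimensional Lyapunov--Schmidt reduction. For each isolated critical point $u_i$ of $\jn$ one extracts a finite-dimensional subspace $V_i\subset C^1(\overline\O)$ carrying the nonpositive part of $Q^{\a_n}_{u_i}$, sets $V^n=\sum_i V_i$ with $L^2$-complement $W^n$, and solves $\langle \jn'(u_i+v+\psi_i(v)),w\rangle=0$ for $w\in W^n$ to obtain reduced functions $\varphi_i(v)=\jn(u_i+v+\psi_i(v))$ that are genuinely $C^2$ on the finite-dimensional space $V^n$. Sard's lemma is then applied iteratively to the maps $\varphi_i':V^n\to (V^n)'$, producing $h_n\in V^n\subset C^1(\overline\O)$ with $\|h_n\|_{C^1(\overline\O)}<1/n$ such that every critical point $\tilde u$ of $J_{\e,\a_n,h_n}$ satisfies $m(J_{\e,\a_n,h_n},\tilde u)=m^*(J_{\e,\a_n,h_n},\tilde u)$. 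Your proposal skips this reduction entirely, and with it the only mechanism in this setting that lets one run a Sard/Marino--Prodi argument; this is the missing idea, not merely a missing detail.
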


\begin{Rmk}
Considering the  case in which $(P_\e)$ has fewer then $2\,{\mathcal P}_1(\O)-1$ distinct solutions, we will see that all the solutions of $(P_n)$ are arbitrarly close to solutions of~$(P_\e)$.
More precisely, if $\bar{u}$ is a solution of $(P_\e)$ and the multiplicity of $\bar{u}$ is $\bar{k}$, then, for any fixed $R>0$, each problem $(P_n)$ has at least $\bar{k}$ distinct solutions in $B_R(\bar{u})$ and besides these solutions converge to $\bar{u}$ in $C^1(\overline{\O})$-norm, for $n\to \infty$.
\end{Rmk}

\smallskip
We mention that in \cite{alves}, using
Ljusternik--Schnirelman category, Alves  has proved the existence of
${ cat}(\Omega)$ solutions to $(P_\varepsilon)$, when $p\geq 2$.

Perturbation results in Morse theory for quasilinear problem having
a right-hand side subcritically at infinity have been obtained in
\cite{cvmp,CLV} (see also \cite{CD,DL}).

\section{Proofs of Theorems \ref{cat} and \ref{aim}}

Combinining $(f_1)$ and $(f_4)$, we see that
there are $q\in(p,p^*)$ and $c>0$ such that, for every $t>0$,
\begin{equation}\label{somf1}
tf'(t)\leq \frac{p-1}{2}t^{p-1}+ct^{q-1}
\end{equation}
\begin{equation}\label{somf}
f(t)\leq \frac{1}{2}t^{p-1}+ct^{q-1}
\end{equation}
\begin{equation}\label{somF}
F(t)\leq \frac{1}{2p}t^{p}+ct^{q}.
\end{equation}

Standard arguments prove that the solutions to $(P_\ge)$ correspond to 
critical points of the $C^1$ functional $I_\ge: W^{1,p}_0(\Omega)
\to \R$ defined by setting
\[I_\ge(u) = \frac{\ge^p}{p} \int_{\Omega} |\nabla u|^p \, dx +
\frac{1}{p} \int_{\Omega} |u|^p \, dx -  \int_{\Omega} F(u) \, dx.\]

\noindent We define an equivalent norm on $\w$ as

\[\|u \|_\varepsilon= \left(\ge^p\int_{\Omega} |\nabla u|^p \, dx + \int_{\Omega} |u|^p \, dx\right)^{\frac{1}{p}}\]

while $\langle \cdot , \cdot  \rangle :
W^{\!-1,p'}(\Omega) \times W_0^{1,p}(\Omega) \to \R$ denotes the
duality pairing.

\bigskip

Denoting by $A_\varepsilon:W^{1,p}_0(\Omega)\to \R$
\[A_\varepsilon(u)=\langle
I'_\ge(u), u \rangle,\]
we introduce the  Nehari manifold
\[ \Sigma_\ge(\Omega) =\{u \in \w  \ : \ u \neq 0, \ A_\varepsilon(u)=0 \}. \]

Naturally each nontrivial critical point of $I_\ge$ is a nonnegative
function which belongs to $\Sigma_\ge(\Omega)$.

Although $f$ may be not $C^1$ in $0$, the
assumptions  
on $f$, through (\ref{somf1}) and (\ref{somf}),
assure that
$A_\ge$ is still a $C^1$ functional   
and 
\[\langle
A'_\ge(u), v \rangle=p\,\ge^p \int_{\Omega} \frac{(\nabla u/\nabla v)}{|\nabla u|^{2-p}} \, dx +
 p\int_{\Omega} \frac{ u v}{\ |u|^{2-p}} \, dx -  \int_{\Omega} f'(u)uv+f(u)v \ dx.\]

\medskip

The following Lemma lists some useful properties about $\Sigma_\ge(\Omega)$, which hold also when $\O$ is replaced by another bounded set or by $\R^N$. The proof is strongly inspired by \cite{bencicerami}, even if with some slightly new arguments.

\begin{Lem}\label{diff}
For every $\ge>0$, $\Sigma_\ge(\Omega)$ is a $1$-codimensional
submanifold of $\w$, which is $C^{1}$-diffeomorphic to
\[{\mathcal S}_\varepsilon =\{u\in \w : \
\|u\|_\varepsilon=1\} \setminus \{u\in \w: \ u\leq 0 \, \hbox{ a.e. in } \O \}. \]

Furthermore
there exist $\sigma_\ge>0$ and $K_\ge>0$ such that 
\begin{equation}\label{ccc}
 \|u\| \geq \sigma_\ge, \quad I_\ge(u) \geq K_\ge \qquad \forall \, u \in \Sigma_\varepsilon(\Omega).
\end{equation}
\end{Lem}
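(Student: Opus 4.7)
The plan is the classical Nehari manifold construction, adapted to the low regularity available. Since the paper already grants $A_\varepsilon\in C^1(W_0^{1,p}(\Omega))$ together with its explicit derivative, the three tasks are: verify that $0$ is a regular value of $A_\varepsilon$ on the nontrivial set, build a radial retraction $\mathcal{S}_\varepsilon\to\Sigma_\varepsilon(\Omega)$, and extract the two quantitative bounds in \eqref{ccc}.

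First I would show that $\langle A'_\varepsilon(u),u\rangle<0$ for every $u\in\Sigma_\varepsilon(\Omega)$, which via the implicit function theorem gives the $C^1$ submanifold structure of codimension one. The explicit formula yields
\begin{equation*}
\langle A'_\varepsilon(u),u\rangle = p\,\|u\|_\varepsilon^p - \int_\Omega\bigl[f'(u)u^2+f(u)u\bigr]\,dx,
\end{equation*}
and substituting the Nehari identity $\|u\|_\varepsilon^p=\int_\Omega f(u)u\,dx$ converts this into
\begin{equation*}
\langle A'_\varepsilon(u),u\rangle = -\int_\Omega\bigl[f'(u)u^2-(p-1)f(u)u\bigr]\,dx.
\end{equation*}
Condition $(f_3)$, equivalently $tf'(t)>(p-1)f(t)$ for $t>0$, together with $(f_5)$ (which gives $f(t)=f'(t)=0$ on $\{t<0\}$), makes the integrand nonnegative and strictly positive on $\{u>0\}$. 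Since $u\in\Sigma_\varepsilon(\Omega)$ forces $u^+\not\equiv 0$ (otherwise $A_\varepsilon(u)=\|u\|_\varepsilon^p>0$), strict negativity follows.

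For the diffeomorphism with $\mathcal{S}_\varepsilon$ I would project radially. Given $v\in\mathcal{S}_\varepsilon$, the equation $A_\varepsilon(tv)=0$ for $t>0$ is equivalent to
\begin{equation*}
1=\|v\|_\varepsilon^p = \int_\Omega\frac{f(tv^+)}{(tv^+)^{p-1}}(v^+)^p\,dx =: g(t).
\end{equation*}
By $(f_3)$, $s\mapsto f(s)/s^{p-1}$ is strictly increasing, so $g$ is strictly increasing in $t$; by $(f_4)$, $f(s)/s^{p-1}\to 0$ as $s\to 0^+$, hence $g(t)\to 0$ as $t\to 0^+$; from $(f_2)$ one derives in the standard way that $F(t)/t^{1/\theta}$ is nondecreasing, so $F(t)\geq c\,t^{1/\theta}$ eventually with $1/\theta>p$, and combining with $F(t)\leq\theta tf(t)$ gives $f(t)/t^{p-1}\to+\infty$ as $t\to+\infty$, whence $g(t)\to+\infty$ by monotone convergence on $\{v^+>0\}$. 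Intermediate value and monotonicity produce a unique $t(v)>0$. The map $v\mapsto t(v)v$ is $C^1$ by the implicit function theorem applied to $(t,v)\mapsto A_\varepsilon(tv)$, whose transversality condition $\frac{d}{dt}A_\varepsilon(tv)|_{t(v)} = \langle A'_\varepsilon(u),u\rangle/t(v)<0$ is exactly the nondegeneracy from the previous step. Its inverse is $u\mapsto u/\|u\|_\varepsilon$.

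For \eqref{ccc}, inserting \eqref{somf} multiplied by $t\geq 0$ into the Nehari identity and using the Sobolev embedding $W_0^{1,p}\hookrightarrow L^q$ (with $q<p^*$) give, for $u\in\Sigma_\varepsilon(\Omega)$,
\begin{equation*}
\|u\|_\varepsilon^p=\int_\Omega f(u)u\,dx\leq\tfrac{1}{2}\|u\|_\varepsilon^p+C_\varepsilon\|u\|_\varepsilon^q,
\end{equation*}
so that $\|u\|_\varepsilon\geq\sigma_\varepsilon>0$ since $q>p$. For the energy, using $(f_2)$ and the Nehari identity,
\begin{equation*}
I_\varepsilon(u)=\tfrac{1}{p}\|u\|_\varepsilon^p-\int_\Omega F(u)\,dx\geq\Bigl(\tfrac{1}{p}-\theta\Bigr)\int_\Omega f(u)u\,dx=\Bigl(\tfrac{1}{p}-\theta\Bigr)\|u\|_\varepsilon^p\geq K_\varepsilon>0.
\end{equation*}
The one delicate point, beyond the routine bookkeeping, is the possible non-$C^1$ character of $f$ at $0$ (Remark~\ref{nonc1}): justifying the manipulations involving $f'(u)$ requires the growth control \eqref{somf1}, but since the paper has already invoked it to assert $A_\varepsilon\in C^1$, this obstacle is neutralized at the outset.
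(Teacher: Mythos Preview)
Your proof is correct and follows the same classical Nehari manifold construction as the paper: strict negativity of $\langle A'_\varepsilon(u),u\rangle$ on $\Sigma_\varepsilon(\Omega)$ via $(f_3)$, radial parametrization from $\mathcal{S}_\varepsilon$ with uniqueness from the monotonicity of $f(t)/t^{p-1}$, and the $C^1$ structure via the implicit function theorem.

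There are only minor presentational differences. The paper locates the radial parameter as the maximizer of $t\mapsto I_\varepsilon(tv)$ (showing $I_\varepsilon(tv)\to-\infty$ via $(f_2)$ on a set $\{v>\delta_0\}$), whereas you solve $A_\varepsilon(tv)=0$ directly by studying the monotone function $g(t)$; the two are equivalent. For the quantitative bounds your route is slightly more direct: you obtain $\sigma_\varepsilon$ by plugging \eqref{somf} into the Nehari identity and invoking Sobolev, and then $K_\varepsilon$ from $(f_2)$ via $I_\varepsilon(u)\geq(\tfrac{1}{p}-\theta)\|u\|_\varepsilon^p$. The paper instead first extracts $K_\varepsilon$ from the max characterization $I_\varepsilon(u)=\max_{t\geq 0}I_\varepsilon(tv)\geq\max_{t\geq 0}(\tfrac{1}{2p}t^p-c_\varepsilon t^q)$, and then deduces $\sigma_\varepsilon$ from $K_\varepsilon>0=I_\varepsilon(0)$ by continuity. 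Both arguments are standard; yours avoids the detour through the maximum.
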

\begin{proof}
Taking account of  $(\ref{somF})$, there is $c_{\varepsilon}>0$ such that
	\begin{equation}\label{daf4}
		I_\ge(u)  \geq \frac{1}{2p}\|u\|^p_{\varepsilon}-c_{\varepsilon}\|u\|^q_{\varepsilon} \qquad \forall \, u\in \w,
	\end{equation}
	therefore $0$ is a local minimum for $I_\ge$.
	
	\medskip
	
Let us denote by
\[\O_{u,\delta}=\left\lbrace x\in \O \ :\ u(x)>\delta\right\rbrace 
\]
for any $u\in \w$ and $\delta \in \R$.
From $(f_5)$ we infer that
\[ u \in \Sigma_\ge(\Omega) \ \Longrightarrow \
|\O_{u,0}|>0,
\]

hence, by $(f_3)$,
\begin{equation}\label{dini}
\langle A'_\ge(u), u \rangle < 0 \qquad \forall u \in
\Sigma_\ge(\Omega).
\end{equation}

\smallskip

For every fixed $v \in {\mathcal S}_\varepsilon$, let us consider the map
 $t \in [0, +\infty) \mapsto I_\ge(t v)$.
 We start proving that
 \begin{equation}\label{-in}
 \lim_{t \to +
 	\infty} I_\ge(tv) = - \infty
 \end{equation}
 Actually, there is $\d _0>0$ such that $|\O_{v\!,\d_0}|>0$. Indeed, if not, for every $n\in \N$, it should be
 $|\O_{v,\frac 1 n}|=0$, so that
 \[ |\O_{v,0}|=\lim\limits_{n \to \infty}|\O_{v,\frac 1 n}|=0  \]
 which gives a contradiction, as $v\in {\mathcal S}_\varepsilon$.
 \newline By $(f_2)$ we infer that
 $F(s)\geq s^{\frac 1 \theta}\,F(\d_0)/\d_0^{\frac 1 \theta}$, for any $s\geq \d_0$.
 Hence, for any $t\geq 1$
 \[ I_\ge(tv)
 \leq \frac{t^p}{p}- \int\limits_{\O_{v\!,\d_0}}\hspace{-2mm}F(tv(x))\, dx \ \leq  \frac{t^p}{p}-t^{\frac 1 \theta}F(\d_0) |\O_{v,\d_0}|\,\]

 which proves (\ref{-in}).

 As a consequence, there is  $\xi>0$
 such that \[ I_\ge(\xi v)=\max_{t > 0}I_\ge(t v).\]
  Clearly $\xi v$ belongs to $\Sigma_\ge(\Omega)$ and $\int_{\O}
  f(\xi v)v/{\xi}^{p-1} \ dx=1,$ hence we
deduce  by $(f_3)$ that $\xi=\xi_\ge(v)$ is unique
 and $\Sigma_\ge(\Omega)$  is the image of the function $\psi_\ge :
 {\mathcal S}_\varepsilon \to
 \w$ defined by  $\psi_\ge(v) =
 \xi_\ge(v) v$.

Taking account of (\ref{dini}), the implicit function theorem assures that $\xi_\ge$ and $\psi_\ge$ are $C^{1}$ 
functions.

\smallskip

Finally, for each $u\in \Sigma_\ge(\Omega)$,  $v=u/\|u\|_\varepsilon$ belongs to $ {\mathcal S}_\varepsilon$ and, using (\ref{daf4}),
\[I_\ge(u)= \max_{t \geq 0}I_\ge(tv)\geq \max_{t \geq 0} \left(\frac{1}{2p}t^p -c_\varepsilon t^q\right)=K_\varepsilon>0.\]

As $I_\ge(0)=0$, by continuity we complete the proof.
\end{proof}

\bigskip

The following Lemma shows how $\Sigma_{\ge}(\Omega)$ is a natural constraint for problem $(P_\ge)$.

\begin{Lem}
	$u$  is a nontrivial
	critical point of $I_\ge$ if and only if it is a
	critical point of $I_\ge$ on $\Sigma_{\ge}(\Omega)$, moreover $(I_\ge)$ and $(I_\ge)_{|\Sigma_\ge(\Omega)}$ satisfy $(P.S.)_c$ for
	all $c \in \R$.
\end{Lem}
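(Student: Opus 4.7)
The plan is to handle the natural-constraint claim and the two Palais--Smale statements in turn.

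\textbf{Natural constraint.} If $u$ is a nontrivial critical point of $I_\ge$, then $A_\ge(u) = \langle I'_\ge(u), u\rangle = 0$, so $u \in \Sigma_\ge(\O)$ and it is automatically a critical point of $I_\ge|_{\Sigma_\ge}$. Conversely, since Lemma~\ref{diff} shows $\Sigma_\ge(\O)$ is a $C^1$-submanifold, any $u \in \Sigma_\ge$ critical for the restriction produces a Lagrange multiplier $\mu \in \R$ with $I'_\ge(u) = \mu A'_\ge(u)$. Pairing with $u$ gives $0 = \mu\,\langle A'_\ge(u), u\rangle$, and (\ref{dini}) forces $\mu = 0$, hence $I'_\ge(u) = 0$.

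\textbf{(P.S.)$_c$ for $I_\ge$ on $\w$.} Let $(u_n)$ satisfy $I_\ge(u_n) \to c$ and $I'_\ge(u_n) \to 0$ in $W^{-1,p'}(\O)$. Using $(f_2)$ with $\theta<1/p$, and noting that $\theta f(t)t - F(t) \geq 0$ for every $t \in \R$ (with $(f_5)$ handling $t<0$),
\[I_\ge(u_n) - \theta\langle I'_\ge(u_n), u_n\rangle \geq \left(\tfrac{1}{p} - \theta\right)\|u_n\|^p_\ge.\]
Since the left side is at most $C(1 + \|u_n\|_\ge)$, boundedness follows. Extracting a subsequence with $u_n \rightharpoonup u$ in $\w$ and $u_n \to u$ in $L^s$ for $s<p^*$, the subcritical bound (\ref{somf}) yields $\int_\O f(u_n)(u_n - u)\, dx \to 0$; since also $\langle I'_\ge(u_n), u_n-u\rangle \to 0$, the operator $T_\ge(v) = -\ge^p \D_p v + |v|^{p-2}v$ satisfies $\langle T_\ge(u_n) - T_\ge(u), u_n-u\rangle \to 0$, and its $(S_+)$ property on $\w$ gives $u_n \to u$ strongly.

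\textbf{(P.S.)$_c$ for $I_\ge|_{\Sigma_\ge}$.} For $(u_n) \subset \Sigma_\ge$ with $I_\ge(u_n) \to c$ and $dI_\ge|_{\Sigma_\ge}(u_n) \to 0$, Lagrange multipliers produce $\mu_n \in \R$ with $I'_\ge(u_n) - \mu_n A'_\ge(u_n) \to 0$ in $W^{-1,p'}(\O)$. The Nehari identity $\|u_n\|^p_\ge = \int f(u_n) u_n\, dx$ combined with $(f_2)$ gives $I_\ge(u_n) \geq (\tfrac{1}{p} - \theta)\|u_n\|^p_\ge$, hence $(u_n)$ is bounded and $u_n \rightharpoonup u$ weakly along a subsequence. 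By compact embedding and subcritical growth, $\int f(u_n) u_n \to \int f(u) u$; together with (\ref{ccc}) and $(f_5)$, this forces $u^+$ to be nontrivial. Testing the Lagrange relation against $u_n$ and using $\langle I'_\ge(u_n), u_n\rangle = 0$ yields $\mu_n \,\langle A'_\ge(u_n), u_n\rangle \to 0$, so it remains to show $\langle A'_\ge(u_n), u_n\rangle$ stays bounded away from zero.

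\textbf{Main obstacle.} This final bound is the hardest point. On $\Sigma_\ge$,
\[\langle A'_\ge(u_n), u_n\rangle = (p-1)\int_\O f(u_n) u_n\, dx - \int_\O f'(u_n) u_n^2\, dx.\]
Here $(f_4)$ ensures $t^2 f'(t)$ extends continuously to $0$, while (\ref{somf1}) provides subcritical control, so Vitali's theorem gives $\int f'(u_n) u_n^2 \to \int f'(u) u^2$, and similarly for the other term. The limit of $\langle A'_\ge(u_n), u_n\rangle$ equals $-\int_{\{u>0\}} u\,[f'(u)u - (p-1) f(u)]\, dx$, which is strictly negative by $(f_3)$ and the nontriviality of $u^+$. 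Hence $\mu_n \to 0$, so $I'_\ge(u_n) \to 0$ in $W^{-1,p'}(\O)$, and the preceding step produces a strongly convergent subsequence in $\w$.
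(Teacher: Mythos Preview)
Your argument is correct and follows essentially the same route as the paper's: the natural-constraint claim via (\ref{dini}), boundedness of constrained Palais--Smale sequences from $(f_2)$, and the key step showing $\langle A_\ge'(u_n),u_n\rangle$ converges to a strictly negative quantity (using subcritical convergence of $\int f'(u_n)u_n^2$ and $\int f(u_n)u_n$, together with $(f_3)$ and the nontriviality of $u^+$ forced by (\ref{ccc})) are all as in the paper. The only difference is organizational: you prove $(P.S.)_c$ for the free functional $I_\ge$ directly here via the $(S_+)$ property, whereas the paper defers this to Corollary~\ref{lps}, where it is obtained as the special case $\alpha=0$, $h=0$ of the compactness result for the whole family $J_{\ge,\alpha,h}$.
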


\begin{proof}
	The first statement comes directly from (\ref{dini}).
	\newline In Corollary \ref{lps} it will be proved that $I_\ge$ satisfies $(P.S.)_c$ for any $c\in \R$.
	\newline Let $c\in \R$ and $u_k\subset \Sigma_{\ge}(\Omega)$, $\lambda_k \subset \R$ be sequences such that $\ie(u_k)\to c$ and 
	\begin{equation}\label{vinc1}
	\ie'(u_k)-\lambda_k A_\ge'(u_k)\to 0. 
	\end{equation}
	Since
	$\left(\frac 1 p-\theta\right)\|u_k\|_\e^{p}\leq \ie(u_k)
	$, the sequence
	$u_k$ is bounded, so that
	\begin{equation}\label{vinc0}
		-\lambda_k\langle A_\ge'(u_k),u_k\rangle \to 0.
		\end{equation}

	Moreover there is $\bar u\in\w$ such that
	$u_k$ converges to $\bar u$, weakly in $\w$ and strongly in $L^r(\O)$, if $r\in (p,p^*)$.
		Therefore, through (\ref{somf1}) and (\ref{somf}),
	\medskip
	
	\[
	\int_\O f'(u_k)u_k^2-(p-1)f(u_k)u_k\ \to \int_\O f'(\bar u)\bar u^2-(p-1)f(\bar u)\bar u\]
	
	where, by $(f_3)$, $a_0=\int_\O f'(\bar u)\bar u^2-(p-1)f(\bar u)\bar u \geq 0$. If $a_0=0$,
	then $\bar u(x)\leq 0$ almost everywhere in $\O$, and in particular
	
	\[\|u_k\|_\varepsilon^p=\int_\O f(u_k)u_k\, dx \ \to \ \int_\O f(\bar u)\bar u\, dx=0\]
	which contradicts (\ref{ccc}).

	So, taking account of (\ref{vinc0}) and (\ref{vinc1}),
	\[
	-\langle A_\ge'(u_k),u_k\rangle\ \to \ a_0>0 \ \Rightarrow \ \lambda_k\to 0  \ \Rightarrow \ \ie'(u_k)\to 0
	\]
	
	which, as  $I_\ge$ satisfies $(P.S.)_c$, concludes the proof.
\end{proof}

\bigskip

Since $I_\ge$ satisfies (P.S.) on $\Sigma_\ge(\Omega)$, the infimum is
achieved. Let us denote 
\[m(\e, \Omega)= \inf\{ I_\varepsilon (u) \ : \ u \in
\Sigma_\ge(\Omega) \}.
\]

\bigskip

Without any loss of generality, we shall assume that $0 \in \Omega$.
Moreover we denote by $r>0$ a number such that $\Omega_r^+ = \{ x
\in \R^N \ | \ d(x, \Omega) < r \}$ and $\Omega_r^- = \{ x \in
\Omega \ | \ d(x, \partial\Omega) > r \}$ are homotopically
equivalent to $\Omega$ and $\ball{0}{r} \subset \Omega$.

\noindent We notice that if $\Omega = \ball{y}{r}$, the number $m(\varepsilon,
\ball{y}{r})$ does not depend on $y$, so we set
\[m(\varepsilon, r) = m(\varepsilon, \ball{y}{r}).\]

We also set  $\Sigma_\varepsilon^{m(\varepsilon,r)} =\{ u \in
\Sigma_\varepsilon (\Omega)  \ : \ I_\varepsilon(u) \leq
m(\varepsilon, r) \}$.

\bigskip

Now we can reason as in \cite{cvjde} so that, relying also on \cite{doomed,franchilanconelliserrin} which still hold when $p\in (1,2)$, we infer the following two results (cf. Proposition 4.4 and 4.6 in \cite{cvjde}).

\bigskip

\begin{Prop}\label{map}
	There exists $\varepsilon^* >0$ such that for any $\varepsilon \in
	(0, \varepsilon^*)$
	\[
	\dim H^k(\Sigma_{\varepsilon}^{m(\varepsilon, r)} )\geq \dim
	H^k(\O).
	\]
\end{Prop}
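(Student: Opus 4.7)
The plan is to adapt the classical Benci--Cerami linking strategy to the present quasilinear setting. I would construct an ``implantation'' map $\Phi_\varepsilon : \Omega_r^- \to \Sigma_\varepsilon^{m(\varepsilon, r)}$ together with a ``barycenter'' map $\beta : \Sigma_\varepsilon^{m(\varepsilon, r)} \to \Omega_r^+$ whose composition is homotopic to the inclusion $j : \Omega_r^- \hookrightarrow \Omega_r^+$. Since $r$ has been chosen so that $\Omega_r^\pm$ are both homotopy equivalent to $\Omega$, the inclusion $j$ induces an isomorphism on cohomology. Consequently $\Phi_\varepsilon^\ast \circ \beta^\ast = j^\ast$ is an isomorphism, so $\beta^\ast$ is injective, yielding $\dim H^k(\Sigma_\varepsilon^{m(\varepsilon, r)}) \geq \dim H^k(\Omega_r^+) = \dim H^k(\Omega)$.

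The implantation map is built as follows. For any $y \in \Omega_r^-$ one has $\ball{y}{r} \subset \Omega$, so the infimum $m(\varepsilon, \ball{y}{r}) = m(\varepsilon, r)$ is realised by a (radial) ground state $w_{\varepsilon,y}$ which, extended by zero outside $\ball{y}{r}$, defines an element $\Phi_\varepsilon(y) \in \Sigma_\varepsilon^{m(\varepsilon, r)}$; continuity in $y$ comes from the radial symmetry of the problem on the ball and a canonical choice of profile. For the barycenter, a natural choice is
\[
\beta(u) = \frac{1}{\|u\|_\varepsilon^p}\int_\Omega x\,\bigl(\varepsilon^p |\nabla u|^p + |u|^p\bigr)\,dx,
\]
which, since the ground state on a ball is radial about its center, gives $\beta(\Phi_\varepsilon(y)) = y$ for every $y \in \Omega_r^-$. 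The straight-line homotopy $H(t,y) = (1-t)\,y + t\,\beta(\Phi_\varepsilon(y))$ then lies in $\Omega_r^+$ for $\varepsilon$ small, so $\beta \circ \Phi_\varepsilon \simeq j$.

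The main obstacle, and the step that dictates the choice of $\varepsilon^\ast$, is to establish that for $\varepsilon$ small enough $\beta$ takes values in $\Omega_r^+$ on the \emph{entire} sublevel set, and not merely on the image of $\Phi_\varepsilon$. I would argue by contradiction: if there were $\varepsilon_n \to 0^+$ and $u_n \in \Sigma_{\varepsilon_n}^{m(\varepsilon_n, r)}$ with $\beta(u_n) \notin \Omega_r^+$, a concentration-compactness analysis applied to the rescalings $v_n(x) = u_n(\varepsilon_n x + y_n)$ about suitable concentration points $y_n \in \overline{\Omega}$ would identify a limit profile solving the limiting autonomous equation on $\R^N$, with energy bounded below by the whole-space ground state level $m_\infty$. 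Comparing this lower bound with the asymptotics $\varepsilon_n^{-N} m(\varepsilon_n, r) \to m_\infty$ forces the mass of $u_n$ to concentrate sharply around $y_n$, so that $\beta(u_n) - y_n \to 0$ and $y_n \to y_\infty \in \overline\Omega \subset \Omega_r^+$, a contradiction. This step is the most delicate because in the range $p \in (1,2)$ the required symmetry, regularity and compactness tools must be drawn from the recent results of \cite{cdv, doomed, franchilanconelliserrin}; once these are in place the argument runs in parallel with the Hilbert case of \cite{bencicerami} and the case $p \geq 2$ treated in \cite{cvjde}.
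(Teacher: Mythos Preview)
Your proposal is correct and follows essentially the same route as the paper. The paper does not supply an independent argument here: it simply states that one reasons as in \cite{cvjde} (Proposition~4.4 there), invoking \cite{doomed,franchilanconelliserrin} for the ingredients that must be checked when $1<p<2$. The Benci--Cerami implantation/barycenter scheme you sketch---radial ground state on $\ball{y}{r}$ transplanted into $\Omega$, a barycenter map, and a concentration analysis showing that low-energy Nehari elements have barycenter in $\Omega_r^+$ via comparison with the whole-space ground level---is precisely the content of that reference, and you have identified the correct points where the $p<2$ tools from \cite{doomed,franchilanconelliserrin} enter.

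Two small cosmetic remarks. With your choice of $\beta$ and a radial minimizer centred at $y$, you actually get $\beta\circ\Phi_\varepsilon = j$ on the nose, so the straight-line homotopy you write down is the constant homotopy; no harm, but it is redundant. Second, continuity of $\Phi_\varepsilon$ in $y$ does not require uniqueness of the ground state on the ball: it suffices to fix one radial minimizer on $\ball{0}{r}$ (existence of a radial one follows by symmetrisation) and translate it, which is what the references do.
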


\bigskip

\begin{Prop}\label{ab}
	There exists $\e^*\!>0$ such that for every $\e \in (0,\e^*)$ there are
	$\nobreak{\a>m(\e, \Omega)}$ and $c\in \bigl(0,m(\e,\Omega)\bigr)$ such that
	
	\begin{equation}\label{pt1}
	\pt(I_\e^{\a},I_\e^c)=t\pt(\O)+t\mathcal{Z}(t)
	\end{equation}
	\begin{equation}\label{pt3}
	\pt(\w,I_\e^{\a})=t^2\bigl(\pt(\O)-1\bigr)+t^2\mathcal{Z}(t)
	\end{equation}
	where $\mathcal{Z}(t)$ is a polynomial with nonnegative integer
	coefficients.
\end{Prop}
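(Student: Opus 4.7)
The plan is to follow closely the Morse-theoretic computation carried out for Propositions~4.4 and 4.6 in \cite{cvjde}, whose structure is a quasilinear adaptation of Benci-Cerami \cite{bencicerami}. All the necessary ingredients are already in place here: the Nehari manifold structure of Lemma~\ref{diff}, the Palais-Smale condition for both $I_\e$ on $\w$ and its restriction to $\Sigma_\e(\Omega)$, and the cohomology injection $\dim H^k(\Omega) \leq \dim H^k(\Sigma_\e^{m(\e,r)})$ furnished by Proposition~\ref{map}.

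First I would fix $\e^*$ small enough that Proposition~\ref{map} is in force, and for every $\e \in (0, \e^*)$ use the Palais-Smale condition to pick a regular value $a$ slightly greater than $m(\e, r)$, with no critical value of $I_\e$ inside $(m(\e, r), a]$, together with a regular value $c \in (0, m(\e, \Omega))$. The standard deformation lemma then yields a deformation retraction of $\Sigma_\e^a$ onto $\Sigma_\e^{m(\e,r)}$, so Proposition~\ref{map} can be rephrased as
\[
\pt(\Sigma_\e^a) = \pt(\Omega) + \mathcal{Z}_0(t)
\]
for some polynomial $\mathcal{Z}_0$ with nonnegative integer coefficients.

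The crux is to read the topology of the pairs $(I_\e^a, I_\e^c)$ and $(\w, I_\e^a)$ through the radial structure of $\w$. By Lemma~\ref{diff}, $\psi_\e: \mathcal{S}_\e \to \Sigma_\e(\Omega)$ is a $C^1$ diffeomorphism, and along each ray $\{tv : t \geq 0\}$ with $v \in \mathcal{S}_\e$ the function $t \mapsto I_\e(tv)$ starts at $0$, attains its strict maximum $I_\e(\psi_\e(v))$ at $t = \xi_\e(v)$, and then decreases to $-\infty$; on the complementary region $\{u \leq 0\}$ one has $I_\e(u) = \|u\|_\e^p / p \geq 0$, vanishing only at $u = 0$. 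Using these facts, one builds explicit deformation retractions that identify the homotopy type of $(I_\e^a, I_\e^c)$ with that of a cone pair over $\Sigma_\e^a$, yielding a one-degree shift that accounts for the factor $t$ in (\ref{pt1}). For (\ref{pt3}), one invokes in addition the contractibility of $\w$: the long exact sequence of the pair $(\w, I_\e^a)$ together with $H^k(\w) = 0$ for $k \geq 1$ gives $H^k(\w, I_\e^a) \cong H^{k-1}(I_\e^a)$ for $k \geq 2$, producing a second degree shift. Combining this with (\ref{pt1}) and substituting $\pt(\Sigma_\e^a) = \pt(\Omega) + \mathcal{Z}_0(t)$ reproduces the factor $t^2(\pt(\Omega) - 1)$, where the $-1$ comes from the collapse of the lowest-degree term under the successive shifts.

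The main obstacle will be carrying out these deformation retractions rigorously in the Banach space $\w$, in which $I_\e$ is only $C^1$ and a classical gradient flow is unavailable. This is circumvented by using pseudo-gradient vector fields, whose existence and compatibility with Palais-Smale is guaranteed by Corollary~\ref{lps}; the subtle point is to choose the pseudo-gradient so that its flow respects both the cone structure of the Nehari manifold in the "positive" directions and the trivial behavior of $I_\e$ on $\{u \leq 0\}$. Once these deformations are in place, the algebraic bookkeeping leading to the two Poincar\'e polynomial identities proceeds exactly as in \cite[Prop.~4.6]{cvjde}.
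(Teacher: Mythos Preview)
Your approach is essentially the same as the paper's: the paper does not give an independent proof but simply observes that the argument of \cite[Propositions~4.4 and~4.6]{cvjde} carries over verbatim to the range $1<p<2$ (relying on \cite{doomed,franchilanconelliserrin} for the limiting problem on~$\R^N$), and your proposal is precisely to retrace that argument using the Nehari structure of Lemma~\ref{diff}, Proposition~\ref{map}, and the Palais--Smale property. Your sketch of the radial/cone deformations and the two successive degree shifts is the correct outline of what happens in \cite{cvjde,bencicerami}; just be aware that the sublevel $I_\e^c$ is not contractible (it has an ``outer'' component along each positive ray), so the identification of $(I_\e^\alpha,I_\e^c)$ is not literally a cone pair over $\Sigma_\e^\alpha$ but rather uses the full radial picture to produce $H^k(I_\e^\alpha,I_\e^c)\cong H^{k-1}(\Sigma_\e^\alpha)$ directly---this is exactly how the extra constant term survives in \eqref{pt1} and why the $-1$ appears only in \eqref{pt3}.
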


\bigskip 

\noindent {\bf Proof of Theorem~\ref{cat}.\enspace} By Proposition \ref{map}, we infer that \[\textit{cat}_{\Sigma_\ge^{m(\varepsilon, r)}} (\Sigma_\ge^{m(\varepsilon, r)})\geq \textit{cat}_\O (\O),\] so that, applying classical results of Ljusternick-Schnirelmann theory,  $I_\e:\Sigma_\ge^{m(\varepsilon, r)}\to\R$ has at least $\textit{cat}_\O (\O)$ critical points. 
Moreover, having assumed $\textit{cat}_\O (\O)>1$, we have that $\Sigma_\ge^{m(\varepsilon, r)}$ is not contractible, while $\Sigma_\ge(\O)$ is, hence there is a further critical point $u$ with $I_\e(u)>m(\varepsilon, r)$. $\hfill 
\Box$

\bigskip

In order to prove Theorem~\ref{aim}, which involves Morse theory, 
we recall some notions (see \cite{chang,chang3}).

\begin{defn}\label{gruppi}
	Let $\K$ be a field, $X$ a Banach space and $f$ a $C^1$ functional on $X$. Let $u$ be a critical point of $f$, $c=f(u)$ and
	$U$ be a neighborhood of $u$. We call
	\[
	C_{q}( f,u) = H^{q}( f^{c} \cap U,(f^{c}\setminus\{ u\}) \cap U)
	\]
	the q-th critical group of $f$ at $u$, $q=0,1,2,\dots$, where $f^c=
	\{v \in X \, : \, f(v) \leq~c \}$,  $H^q(A,B)$ stands for the $q$-th
	Alexander-Spanier cohomology group of the pair $(A,B)$ with
	coefficients in  $\K$. By the excision property of the singular
	cohomology theory, the critical groups do not depend on a special
	choice of the neighborhood~$U$.
\end{defn}

\begin{defn}\label{poincare}
	We introduce the Morse polynomial of $f$ in $u$,
	defined as
	\[
	i(f,u)(t) = \sum_{q=0}^{+\infty} \dim C_q (f,u) \, t^q.
	\]
	We call {\sl multiplicity} of $u$ the number $i(f,u)(1) \in \N \cup \{+ \infty\}$.
\end{defn}

The following theorem is a topological version of the classical Morse
relation (cf. Theorem 4.3 in \cite{chang}).

\begin{Theorem}\label{morse-topl}
	Let $X$ be a Banach space and $f$ be a $C^1$ functional on $X$. Let
	$a,b \in \R$ be two regular values for $f$, with $a<b$. If $f$
	satisfies the $(P.S.)_c$ condition for all $c\in (a,b)$ and $u_1 , \dots , u_l$ are the critical points of $f$ in $f^{-1}(a,b)$, then
	
	\begin{equation}\label{morse-rel}
	\sum_{j=1}^{l}i(f,u_j)(t)
	={\mathcal P}_t(f^b,f^a)+(1+t)Q(t)
	\end{equation}
	where $Q(t)$ is a formal series with coefficients in $\N \cup \{+
	\infty\}$.
\end{Theorem}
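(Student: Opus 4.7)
\textbf{Plan for proving Theorem \ref{morse-topl}.} My plan is to combine the Deformation Lemma (available since $f$ is $C^1$ and satisfies $(P.S.)_c$ on $(a,b)$) with a telescoping argument based on the long exact cohomology sequence of triples of sublevel sets. The standard Deformation Lemma, built via a pseudo-gradient vector field, gives that whenever a closed interval $[c_1, c_2] \subset (a,b)$ contains no critical values, $f^{c_2}$ deformation retracts onto $f^{c_1}$, so $H^*(f^{c_2}, f^{c_1}) = 0$. Combined with the compactness of the critical set at each level (forced by $(P.S.)$), this shows that only finitely many critical values $c_1 < \dots < c_k$ lie in $(a,b)$, carrying the critical points $u_1, \dots, u_l$. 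I would then fix regular values $a = a_0 < a_1 < \dots < a_k = b$ with exactly one $c_i \in (a_{i-1}, a_i)$.

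Second, at each level $c_i$ I would take disjoint open neighborhoods $U_j$ of the critical points at level $c_i$, apply excision to strip away the complement, and then use the deformation flow away from the $U_j$ to push $f^{a_i} \setminus \bigcup_j U_j$ down to $f^{a_{i-1}}$, obtaining
\[ H^q(f^{a_i}, f^{a_{i-1}}) \cong \bigoplus_{u_j \text{ at level } c_i} C_q(f, u_j). \]
Consequently $\mathcal{P}_t(f^{a_i}, f^{a_{i-1}}) = \sum_{u_j \text{ at level } c_i} i(f, u_j)(t)$.

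The algebraic core is the following consequence of the long exact cohomology sequence of a triple $Z \subset Y \subset X$. Writing $r_q$ for the rank of the connecting homomorphism $\delta: H^{q-1}(Y,Z) \to H^q(X,Y)$, a direct rank–nullity count on
\[ \cdots \to H^{q-1}(X,Z) \to H^{q-1}(Y,Z) \xrightarrow{\delta} H^q(X,Y) \to H^q(X,Z) \to \cdots \]
yields $\dim H^q(X,Z) = \dim H^q(X,Y) + \dim H^q(Y,Z) - r_q - r_{q+1}$, hence
\[ \mathcal{P}_t(X,Y) + \mathcal{P}_t(Y,Z) = \mathcal{P}_t(X,Z) + (1+t)R(t) \]
for a series $R(t)$ with nonnegative integer coefficients. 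Iterating this identity along the chain $f^a \subset f^{a_1} \subset \dots \subset f^{a_k} = f^b$ telescopes into
\[ \sum_{j=1}^{l} i(f, u_j)(t) = \sum_{i=1}^k \mathcal{P}_t(f^{a_i}, f^{a_{i-1}}) = \mathcal{P}_t(f^b, f^a) + (1+t)Q(t), \]
which is precisely (\ref{morse-rel}).

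I expect the main obstacle to be the bookkeeping when some critical groups are infinite-dimensional: the rank–nullity identity must be interpreted inside the semiring of formal power series with coefficients in $\N \cup \{+\infty\}$, with the convention that infinite terms are absorbed consistently on both sides. A secondary delicate point is checking that the Deformation Lemma goes through in the abstract Banach-space setting assuming only $C^1$ regularity and $(P.S.)$, but this is classical via pseudo-gradient vector fields.
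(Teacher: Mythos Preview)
The paper does not prove Theorem~\ref{morse-topl}: it is stated as a classical result and merely referenced (``cf.\ Theorem~4.3 in~\cite{chang}''). Your outline is precisely the standard argument one finds in Chang's book and elsewhere: pseudo-gradient deformation to isolate critical levels, excision plus the second deformation lemma to identify $H^*(f^{a_i},f^{a_{i-1}})$ with the direct sum of critical groups at that level, and the subadditivity identity from the long exact sequence of a triple, iterated along the filtration. So there is nothing to compare---you have reproduced the classical proof that the paper takes for granted.

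One minor remark: in your excision step, the precise mechanism is the \emph{second} deformation lemma (retracting $f^{c_i}\setminus K_{c_i}$ onto $f^{a_{i-1}}$ while fixing a neighborhood of $K_{c_i}$), not just the noncritical-interval version; you gesture at this with ``deformation flow away from the $U_j$'', but in a full write-up you would want to invoke it explicitly. Your caveat about coefficients in $\N\cup\{+\infty\}$ is well placed: the rank--nullity bookkeeping is formal and remains valid in that semiring, which is exactly why the statement allows $Q(t)$ to have possibly infinite coefficients.
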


\bigskip

\bigskip

\noindent {\bf Proof of Theorem~\ref{aim}.\enspace} Choosing $\e^*$
as required by Proposition \ref{ab}, the proof comes from
(\ref{morse-rel}), (\ref{pt1}) and (\ref{pt3}). In particular,
denoting by $m(u)$ the multiplicity of any critical point $u$ of
$I_\e$ (see Definition~\ref{poincare}), we get

\[\sum _{I_\e(u)<\a}m(u)= {\mathcal{P}_1}(\O)+\mathcal{Z}(1)+2Q_-(1)\geq {\mathcal{P}_1}(\O)\]

\[\sum _{I_\e(u)>\a}m(u)= {\mathcal{P}_1}(\O)-1+\mathcal{Z}(1)+2Q_+(1)\geq {\mathcal{P}_1}(\O) -1\]
where, by Theorem~\ref{morse-topl}, $Q_-(t)$ and $Q_+(t)$ are suitable formal series with
coefficients in $\N \cup \{+ \infty\}$. $\hfill \Box$

\bigskip

\section{Approximating functionals}

In order to obtain a further result which provides at least $2{\mathcal{P}_1}(\O)-1$ distinct solutions for a sequence of problems approaching $(P_{\e})$, we build here some functionals which approximate $I_{\e}$.

\bigskip

Let us fix 
\begin{equation}\label{defs}
s>\max \left\lbrace 2,2q-1\right\rbrace
\end{equation}
\indent where $q$ is introduced by $(f_1)$. For  every $\a \geq 0$ we set
\[	F_\a(t)=F\left(\left(\a+(t^+)^s\right)^{1/s}\right),\]
\[f_\a(t)=F'_\a(t),\]
\[G_\a(t)=\frac{1}{p} \bigl(\alpha+t^{2}\bigr)^{\frac p {2}}.\]

For every $\e,\, \a > 0$ and $h\in C^1(\overline{\,\Omega})\ $
we define 

\begin{equation} \label{ja}
J_{\varepsilon,\alpha}(u)= \displaystyle{\frac{\e^p}{p} \int_{\Omega}} \bigl(\a +|\nabla u|^2\bigr)^{\frac p 2} dx
+\displaystyle{\frac{1}{p} \int_{\Omega} \bigl(\alpha+u^{2}\bigr)^{\frac p {2}} dx
	-\, \int_{\Omega}} F_{\a} (u)dx
\end{equation}

\[
J_{\e ,\alpha,h}(u)=J_{\e ,\alpha}(u)-\int_{\Omega} h(x) u(x)\,dx.
\]

\bigskip

It is immediate that
\[
	I_{\varepsilon}(u)\!= \!\!\int_{\Omega}\! \!\e^p G_0(|\nabla u|) \!+\! G_0(u)\!-  \!F_{0} (u)\quad 
	J_{\varepsilon,\alpha}(u)\!=\!\! \int_{\Omega}\!\!\e ^pG_\a(|\nabla u|)\!+\! G_\a( u)\!	- \! F_{\a} (u).
\]

Note that $\, G_{\a}, F_{\a} \in C^2(\R, \R)$
when $\a>0$, while $G_0$ is just $C^1$ and so it could be about $F_0$ (see Remark \ref{nonc1}).

Nevertheless,  the functional $\ u\mapsto \int_{\Omega}  G_\a(|\nabla u|)$ and, consequently, $J_{\varepsilon,\alpha}$ are still just $C^1$ in  $\w$.

\bigskip 

\begin{Lem}\label{vicino}
	For any bounded $B\subset \w$
	\begin{equation}\label{vicinalfa}
		\lim\limits_{\a \to 0}\|J_{\varepsilon,\alpha}-I_{\varepsilon}\|_{C^1(B)}=0
	\end{equation}
	\begin{equation}\label{vicinh}
	\lim\limits_{\|h\|_{C^1\!(\bar{\O})} \to 0}\|J_{\varepsilon,\alpha,h}-J_{\varepsilon,\alpha}\|_{C^1(B)}
	=0.
	\end{equation}

\end{Lem}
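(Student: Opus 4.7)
My plan is to dispatch (\ref{vicinh}) by inspection and then split the difference $J_{\varepsilon,\alpha}-I_{\varepsilon}$ into three pieces: the gradient $G_\alpha$-term, the lower-order $G_\alpha$-term, and the $F_\alpha$/$f_\alpha$-term. Let $B\subset\w$ be bounded. For (\ref{vicinh}), note $J_{\varepsilon,\alpha,h}(u)-J_{\varepsilon,\alpha}(u)=-\int_\Omega h u\,dx$ and $J'_{\varepsilon,\alpha,h}(u)-J'_{\varepsilon,\alpha}(u)=-h$ as an element of $W^{-1,p'}(\Omega)$; by H\"older and the embedding $\w\hookrightarrow L^p(\Omega)$, both quantities are controlled by $c\|h\|_{L^{p'}}\leq c'\|h\|_{C^1(\overline\Omega)}$, uniformly on $B$.

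For the $G_\alpha$ terms of (\ref{vicinalfa}) at the functional level, I would use the subadditivity $(\alpha+t^2)^{p/2}\leq\alpha^{p/2}+t^p$ (valid since $p/2<1$) to obtain $|G_\alpha(t)-G_0(t)|\leq\alpha^{p/2}/p$ uniformly in $t\in\R$, which integrates to a uniform bound on $\w$. At the derivative level the substantive task is to prove the uniform pointwise estimate
\[
|g_\alpha(\xi)-g_0(\xi)|\leq c\,\alpha^{(p-1)/2}\qquad\text{for every }\xi\in\R^N,
\]
where $g_\alpha(\xi)=(\alpha+|\xi|^2)^{(p-2)/2}\xi$. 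My approach is to split at the natural scale $|\xi|=\alpha^{1/2}$: on $\{|\xi|<\alpha^{1/2}\}$ both $|g_\alpha(\xi)|$ and $|g_0(\xi)|$ are directly $\leq\alpha^{(p-1)/2}$ (the first since $(\alpha+|\xi|^2)^{(p-2)/2}\leq\alpha^{(p-2)/2}$ by negativity of the exponent); on $\{|\xi|\geq\alpha^{1/2}\}$, applying the mean value theorem to $\tau\mapsto\tau^{(p-2)/2}$ on $[|\xi|^2,|\xi|^2+\alpha]$ gives $|g_\alpha(\xi)-g_0(\xi)|\leq c\alpha|\xi|^{p-3}\leq c\alpha^{(p-1)/2}$, since $p-3<0$. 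Integrating then delivers $\|g_\alpha(\nabla u)-g_0(\nabla u)\|_{L^{p'}(\Omega)}\to 0$ uniformly in $u\in\w$, and the identical scheme with $u$ replacing $\nabla u$ handles the lower-order piece $(\alpha+u^2)^{(p-2)/2}u-|u|^{p-2}u$.

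For the $F_\alpha$/$f_\alpha$ piece, the definitions give pointwise convergence $F_\alpha\to F_0$ and $f_\alpha\to f_0$ on $\R$, while the subadditivity $(\alpha+(t^+)^s)^{1/s}\leq\alpha^{1/s}+t^+$ combined with (\ref{somf}), (\ref{somF}) and the choice of $s$ in (\ref{defs}) produces $\alpha$-uniform growth dominants of strictly subcritical order. This makes $\{f_\alpha\}$ an equicontinuous family of Nemytskii operators $L^q(\Omega)\to L^{q'}(\Omega)$ and $\{F_\alpha\}$ an analogous family into $L^1(\Omega)$. Since the compact Sobolev embedding $\w\hookrightarrow L^q(\Omega)$ makes $B$ precompact in $L^q$, a standard subsequence/contradiction argument --- extract a cluster point $\bar u\in L^q$, split $f_\alpha(u_n)-f(u_n)$ by inserting $\pm f_\alpha(\bar u)$, and bound each piece via Nemytskii continuity plus dominated convergence at the fixed point $\bar u$ --- gives the required uniform convergence on $B$, both for values and for duality pairings with test functions $v$ with $\|v\|_\varepsilon\leq 1$.

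The decisive step, and the only one where $1<p<2$ genuinely changes the situation compared with \cite{cvjde}, is the pointwise estimate on $g_\alpha-g_0$: the negative exponent $(p-2)/2$ makes $g_\alpha$ singular at $\xi=0$ as $\alpha\to 0^+$, so no single linearization works globally. Splitting at the threshold $|\xi|=\alpha^{1/2}$ and exploiting $p-3<0$ on the exterior region is the essential device; once that estimate is available, all the remaining ingredients are routine interplay between dominated convergence, Nemytskii continuity and Sobolev compactness.
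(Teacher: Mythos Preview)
Your argument is correct; it agrees with the paper for (\ref{vicinh}) and for the $G_\alpha$ functional-level bound, but diverges in two places. For $G'_\alpha-G'_0$ the paper uses subadditivity of $t\mapsto t^{(2-p)/2}$ to get
\[
|G'_0(t)-G'_\alpha(t)|\leq\frac{|t|^{p-1}\alpha^{(2-p)/2}}{(\alpha+t^2)^{(2-p)/2}}
\]
and then splits on $p\leq 3/2$ versus $p>3/2$, obtaining either $\alpha^{(p-1)/2}$ or $\alpha^{(2-p)/2}|t|^{2p-3}$; your scale-splitting at $|\xi|=\alpha^{1/2}$ plus the mean-value estimate is cleaner and yields the single bound $c\,\alpha^{(p-1)/2}$ valid for all $p\in(1,2)$. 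For $f_\alpha-f$ the contrast is sharper: the paper proves the \emph{uniform pointwise} estimate $|F'_\alpha(t)-F'(t)|\leq f(\alpha^{1/s})$ via a monotonicity trick with the auxiliary function $k(t)=t^{(s-1)/s}/f(t^{1/s})$, and this is precisely where the choice $s>2q-1$ in (\ref{defs}) is used. Your soft route through compact Sobolev embedding and dominated convergence also works, but the three-way split you sketch tacitly assumes equicontinuity of $\{f_\alpha\}$ as Nemytskii operators, which you have not checked; the cleanest repair is to drop the split and argue directly that once $u_n\to\bar u$ a.e.\ with an $L^q$ majorant $g$, one has $f_{\alpha_n}(u_n)-f(u_n)\to 0$ a.e.\ with the common dominant $C(1+g^{q-1})\in L^{q'}$, so DCT finishes. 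The paper's pointwise estimate is quantitative and explains the role of $s$; your compactness argument is more routine but portable to settings where no such explicit bound is available.
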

\begin{proof}
We observe that, for any $t\in \R$
\begin{equation}\label{ga}
	|G_\a (t)-G_0(t)|=\left|\frac 1 p (\a +t^2)^{p/2}-\frac 1 p|t|^p\right|\leq \frac{\ \a^{p/2}}{p}\, ,
\end{equation}

\smallskip

\begin{equation}\label{g1a}
\begin{split}
|G'_0 &(t)-G'_\a(t)| 
\,=\,|t|^{p-1}\frac{(\a +t^2)^{\frac{2-p}{2}}-|t|^{2-p}}{(\a +t^2)^{\frac{2-p}{2}}}\\
\smallskip
&\leq\frac{|t|^{p-1}\a^{\frac{2-p}{2}}}{(\a +t^2)^{\frac{2-p}{2}}}\ \leq  \begin{array}{ll}
\medskip
\a^{\frac{p-1}{2}}
& \hbox{if} \ p\in (1,\frac 3 2] \\
\smallskip
\a^{\frac{2-p}{2}}|t|^{2p-3} & \hbox{if} \ p\in (\frac 3 2, 2).
\end{array}
\end{split}
\end{equation}

\bigskip

Moreover, by $(f_2)$
$\ F(\a^{1/s})\leq \theta f(\a^{1/s})\a^{1/s},
$ hence taking account of (\ref{somf}) we get
\begin{equation}\label{fa}
	|F_\a (t)-F(t)|\leq \a^{1/s}f(|t|+\a^{1/s})
	\leq O(\a^{1/s})(1+|t|^{q-1})
	\qquad \forall \, t\in \R.
\end{equation}  

\bigskip

Having chosen $s>2q-1$, we have $\frac{s-1}{2}>q-1$, so by $(f_1)$ 

\begin{equation}\label{hdec}
	t\in (0,+\infty)\mapsto \frac{f^2(t)}{t^{s-1}} \qquad \hbox{is decreasing}.
\end{equation}

Moreover, introducing $k:(0,+\infty) \to \R\ $ defined as  $k(t)=\displaystyle{\frac{t^{\frac{s-1}{s}}}{f(t^{1/s})}}$, it is immediate that 
\[k(a)<k(a+b)<k(a)+k(b)\]
 for any $a,b>0$, hence by (\ref{hdec}) we get

\begin{equation}\label{f1a}
\begin{split}
		|F'& (t)-F_\a'(t)|=
	\Bigl| f(t)-f\left((\a +t^s)^{1/s}\right)\frac{t^{s-1}}
	{\ \left(\a +t^s\right)^{\frac{s-1}{s}}}\Bigr| \\
	&=
	\frac{\ f\left((\a +t^s)^{1/s}\right)f(t)}{\left(\a +t^s\right)^{\frac{s-1}{s}}}
	\left|\frac{\left(\a +t^s\right)^{\frac{s-1}{s}}}{f\left((\a +t^s)^{1/s}\right)}
	-\frac{\ t^{s-1}}{f(t)}\right|\\
	&=
	\frac{\ f\left((\a +t^s)^{1/s}\right)f(t)}{\left(\a +t^s\right)^{\frac{s-1}{s}}}\ 
	\Bigl(k(\a +t^s)-k(t^s)\Bigr)
	 \ \leq \ \frac{\ f\left((\a +t^s)^{1/s}\right)f(t)}{\left(\a +t^s\right)^{\frac{s-1}{s}}}\ k(\a)\\
	&\leq \ \frac{f^2\left((\a +t^s)^{1/s}\right)}{\left(\a +t^s\right)^{\frac{s-1}{s}}}
	\  k(\a)
	\,\leq \ \frac{f^2\left(\a ^{1/s}\right)}{\a^{\frac{s-1}{s}}}\ \frac{\a^{\frac{s-1}{s}}}{f(\a^{1/s})}\,=\ f\left(\a ^{1/s}\right).
	\end{split}
\end{equation}

\bigskip

So, from (\ref{ga}), (\ref{g1a}), (\ref{fa}) and (\ref{f1a}), we infer (\ref{vicinalfa}), while (\ref{vicinh}) is trivial.

\end{proof} 

\bigskip

We now aim to prove that, for every $\e>0$, $\a\in [0,1]$ and $h\in C^1(\bar{\Omega})$,  $J_{\e,\alpha,h}$ satisfies a
compactness condition. We begin to recall a classical definition in a reflexive Banach space, taken from~\cite{browder1983, skrypnik1994}.

\begin{defn}
	Let $X$ be a reflexive Banach space and $D\subset X$. A map $H:D\to X'$ is said to be of class $(S)_+$, if, for every sequence $u_k$ in $D$ weakly convergent to $u$ in $X$ with
	\[ \limsup_{k\to \infty}\langle H(u_k), u_k-u\rangle\leq 0,\]
	we have $\|u_k-u\|\to 0$.
\end{defn}

\bigskip
The following result provides a compactness property about the approximating functionals $J_{\e,\alpha,h}$. It is based on \cite[Theorem 3.5]{ad} (see also \cite[Theorem~2.1]{CD}).
For reader's convenience, we sketch the proof.

\begin{Theorem}\label{s+}
	
	For every $\varepsilon>0,\ p\in (1,2), \ \a \in [0,1],\ h\in  C^1(\bar{\O})$, the functional
	$J'_{\e,\alpha,h}$ is of class $(S)_+$.
\end{Theorem}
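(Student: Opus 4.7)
The plan is to adapt the standard $(S)_+$ argument for $p$-Laplace type operators, where the main delicacy in the case $p<2$ lies in converting a monotonicity-type limit into strong $L^p$ convergence of gradients. Fix $\e>0$, $\a\in[0,1]$, $h\in C^1(\bar\O)$, and let $u_k\in\w$ satisfy $u_k\rightharpoonup u$ in $\w$ with
\[
\limsup_{k\to\infty}\langle J'_{\e,\a,h}(u_k),u_k-u\rangle\le 0.
\]

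First I would dispose of the subcritical and zero-order contributions to $\langle J'_{\e,\a,h}(u_k),u_k-u\rangle$. By Rellich--Kondrachov, $u_k\to u$ strongly in $L^r(\O)$ for each $r\in[1,p^*)$. The growth controls (\ref{somf1})--(\ref{somf}), extended to $f_\a$ through an estimate analogous to (\ref{f1a}), give $|f_\a(t)|\le C(1+|t|^{q-1})$ with $q\in(p,p^*)$, so dominated convergence yields
\[
\int_\O f_\a(u_k)(u_k-u)\,dx\to 0,\qquad \int_\O(\a+u_k^2)^{(p-2)/2}u_k(u_k-u)\,dx\to 0,
\]
and $\int_\O h(u_k-u)\,dx\to 0$ is immediate from $h\in L^\infty$. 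The hypothesis reduces to
\[
\limsup_{k\to\infty}\int_\O(\a+|\nabla u_k|^2)^{(p-2)/2}\nabla u_k\cdot\nabla(u_k-u)\,dx\le 0.
\]
Because $(\a+|\nabla u|^2)^{(p-2)/2}\nabla u\in L^{p'}(\O;\R^N)$ and $\nabla u_k\rightharpoonup\nabla u$ in $L^p$, the analogous integral obtained by replacing $u_k$ with $u$ in the outer factor vanishes in the limit, so subtraction gives
\[
\limsup_{k\to\infty}\int_\O\bigl[(\a+|\nabla u_k|^2)^{\frac{p-2}{2}}\nabla u_k-(\a+|\nabla u|^2)^{\frac{p-2}{2}}\nabla u\bigr]\cdot\nabla(u_k-u)\,dx\le 0.
\]
The integrand is pointwise nonnegative by monotonicity of $\xi\mapsto(\a+|\xi|^2)^{(p-2)/2}\xi$, hence the lim sup is actually a limit equal to $0$.

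The last step, where the case $p<2$ becomes the main obstacle, is to upgrade this to $\nabla u_k\to\nabla u$ in $L^p(\O;\R^N)$. Here I would invoke the classical pointwise inequality, valid for all $\xi,\eta\in\R^N$,
\[
\bigl[(\a+|\xi|^2)^{\frac{p-2}{2}}\xi-(\a+|\eta|^2)^{\frac{p-2}{2}}\eta\bigr]\cdot(\xi-\eta)\ge C_p\,\frac{|\xi-\eta|^2}{(\a+|\xi|^2+|\eta|^2)^{\frac{2-p}{2}}},
\]
combined with H\"older's inequality with exponents $2/p$ and $2/(2-p)$:
\[
\int_\O|\nabla u_k-\nabla u|^p\,dx\le\Bigl(\int_\O\tfrac{|\nabla u_k-\nabla u|^2}{(\a+|\nabla u_k|^2+|\nabla u|^2)^{(2-p)/2}}\,dx\Bigr)^{\!\frac p2}\!\Bigl(\int_\O(\a+|\nabla u_k|^2+|\nabla u|^2)^{\frac p2}dx\Bigr)^{\!\frac{2-p}{2}}\!.
\]
The second factor stays bounded because $\{u_k\}$ is bounded in $\w$, while the first factor tends to $0$ by the previous step; thus $\|\nabla(u_k-u)\|_{L^p}\to 0$, and together with the already-established $L^p$ convergence of $u_k$ this yields $\|u_k-u\|\to 0$, proving $(S)_+$. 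The obstacle is precisely this H\"older interpolation: for $p<2$ the monotonicity inequality produces only a \emph{weighted} $L^2$-estimate for the gradient difference, and one has to trade it against the uniform $W^{1,p}$-bound to recover control in $L^p$; this trick works uniformly in $\a\in[0,1]$ and in particular covers the singular case $\a=0$.
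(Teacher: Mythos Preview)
Your argument is correct and complete, but it follows a genuinely different route from the paper. The paper does not carry out a direct $(S)_+$ computation; instead it writes $J'_{\e,\a}$ in the form $-\dvg\bigl(\nabla\Psi_{\e,\a}(\nabla u)\bigr)+b_\a(u)$, verifies a list of structural hypotheses on $\nabla\Psi_{\e,\a}$ and $b_\a$ (a growth bound, a coercivity bound $\nabla\Psi_{\e,\a}(\xi)\cdot\xi\ge \tfrac{\e^p}{2}|\xi|^p-C$, strict monotonicity, and a sign condition $b_\a(s)s\ge -\delta|s|^{p^*}+c(\delta)$), and then invokes an abstract $(S)_+$ theorem of Almi--Degiovanni. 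Your proof, by contrast, is self-contained: you peel off the compact lower-order terms by Rellich, reduce to the principal part, and then use the sharp pointwise monotonicity inequality for $1<p<2$ together with the H\"older interpolation with exponents $2/p$ and $2/(2-p)$ to turn the weighted $L^2$ control of $\nabla(u_k-u)$ into $L^p$ convergence. The paper's route has the advantage of plugging into a ready-made degree-theoretic framework (useful elsewhere in the reference it cites), while yours is more transparent, requires no external black box, and makes explicit where the uniformity in $\a\in[0,1]$ comes from.
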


\medskip

\begin{proof}
	Let us fix $\e >0$.
	For every $\a \in [0,1]$, let  $\Psi_{\e,\a}:\R^N\to \R$, $b_\a:\R\to \R$ and $H_{\a}:\w\to W^{-1,p'}$ be the maps
	\[\Psi_{\e,\a}(\xi)=\e ^pG_\a(|\xi|)\] 
	\begin{equation}\label{psi+l}
	 b_\a(t)=G'_\a(t)-F'_\a(t)	
	\end{equation}
	\[H_{\a}(u)=\langle J'_{\e,\alpha}(u), \ \cdot \ \rangle\]
	
	so that $H_{\a}(u)=-\dvg\left(\nabla \psa(\nabla u)\right)+b_\a(u)$.  
	\newline We start by showing that there is $C>0$ and, for every $\delta>0$, there is a suitable $c(\delta)\in \R$ such that
	
	\smallskip
	
	\begin{equation}\label{u1}
	|\nabla \psa(\xi)|\leq \e^p|\xi|^{p-1}
	\end{equation}
	
		\begin{equation}\label{u2}
	|b_\a(s)|\leq C+C|s|^{p^*-1}
	\end{equation}
	
		\begin{equation}\label{u3}
	\nabla \psa(\xi)\cdot\xi\geq \frac{\e^p}{2}|\xi|^{p}-C
	\end{equation}
	
		\begin{equation}\label{u4}
	b_\a(s)s\geq -\delta|s|^{p^*}+c(\delta)
	\end{equation}
	
		\begin{equation}\label{u5}
	\bigl(\nabla \psa(\xi)-\nabla \psa(\eta)\bigr)\cdot \bigl(\xi-\eta\bigr)>0
	\end{equation}

	for every $\a \in [0,1],\ \xi \in \R^N, \ \eta\neq\xi \in \R^N,\ s \in \R.$
	
	As $(\ref{u1})$ and $(\ref{u2})$ are trivial, let us prove $(\ref{u3})$.
	Denoting by $\g_\a=(\frac{\a}{2^{2/(2-p)}-1})^{1/2}$,
	\[|\xi|\geq \g_\a \ \Rightarrow \
	\left(\frac{|\xi|^2}{\a + |\xi|^2}\right)^{\frac{2-p}{2}}\geq \frac 1 2
	 \quad \Rightarrow \quad \frac{\e^p|\xi|^2}{(\a+|\xi|^2)^{\frac{2-p}{2}}}\geq \frac{\e^p}{2}|\xi|^{p}\]
	
	 \[|\xi|< \g_\a \ \Rightarrow \quad \frac{\e^p}{2}|\xi|^p\leq \frac{\e^p}{2}\g_\a^p \leq \frac{\e^p}{2}\g_1^p, \]
	
	 so we infer $(\ref{u3})$, choosing $C\geq \frac{\e^p}{2}\g_1^p$.
	
	 \medskip
	
	 Now we immediately see that there is $c_1>0$ such that
	 \[|t b_\a(t)|\leq c_1(1+|t|^p+|t|^q) \quad \forall \a\in [0,1],\ t\in \R.\]
	
	So we get $(\ref{u4})$ putting \[c(\delta)=\min_{t\geq 0}\left(\,\delta|t|^{p^*}-c_1(1+|t|^p+|t|^q)\ \right).\]
	
	\medskip
	
	Let us consider $\eta\neq\xi \in \R^N$. If $|\xi|=|\eta|$, then
	\[\bigl(\nabla \psa(\xi)-\nabla \psa(\eta)\bigr)\cdot \bigl(\xi-\eta\bigr)=\e^p\,\frac{|\xi-\eta|^2}{(\a+|\eta|^2)^{\frac{2-p}{2}}}>0.\]
	
	Otherwise, if $|\xi|\neq|\eta|$, by the monotonicity of the real function $ t\in \R \mapsto t\left(\a+t^2\right)^{\frac{p-2}{2}}\!\!$, we get
	
	\[ |\xi||\eta|\left(\frac{1}{(\a+|\xi|^2)^{\frac{2-p}{2}}}+ \frac{1}{(\a+|\eta|^2)^{\frac{2-p}{2}}}\right)<\frac{|\xi|^2}{(\a+|\xi|^2)^{\frac{2-p}{2}}}+\frac{|\eta|^2}{(\a+|\eta|^2)^{\frac{2-p}{2}}}\]
	
	\smallskip
	
	which gives $(\ref{u5})$.
	
	\medskip

	As $(\ref{u1})-(\ref{u5})$ hold, $\nabla \psa$ and $b_\a$ satisfy the assumptions required by Theorem 3.5 in \cite{ad}, so that $J'_{\e,\alpha}$ is
	of class $(S)_+$. Moreover, for every $h\in C^1(\overline{\O})$, it is immediate that
	$J'_{\e,\alpha,h}$ is
	of class $(S)_+$ too.
\end{proof}

\smallskip

\begin{Cor}\label{lps}
	For every $\varepsilon>0,\ p\in (1,2), \ \a \in [0,1],\ h\in  C^1(\bar{\O})$, the functional
	$J_{\e,\alpha,h}$  satisfies $(P.S.)_c$ for all $c\in \R$.
\end{Cor}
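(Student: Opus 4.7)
The plan is to derive $(P.S.)_c$ as a direct consequence of the $(S)_+$ property already established in Theorem~\ref{s+}, via the standard two-step scheme: first show that every Palais--Smale sequence is bounded in $\w$, then upgrade weak convergence to strong convergence.

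First I would pick a sequence $(u_k)\subset \w$ satisfying $J_{\varepsilon,\alpha,h}(u_k)\to c$ and $J'_{\varepsilon,\alpha,h}(u_k)\to 0$ in $W^{-1,p'}(\Omega)$, and estimate from below the combination $J_{\varepsilon,\alpha,h}(u_k)-\theta\langle J'_{\varepsilon,\alpha,h}(u_k),u_k\rangle$, with $\theta\in(0,1/p)$ furnished by $(f_2)$. The two gradient-type integrands rearrange as
\[
(\alpha+t^2)^{p/2}\Bigl(\tfrac{1}{p}-\theta+\theta\tfrac{\alpha}{\alpha+t^2}\Bigr)\ \geq\ \Bigl(\tfrac{1}{p}-\theta\Bigr)|t|^p,
\]
so the $|\nabla u_k|$-term and the $u_k$-term together dominate $(1/p-\theta)\|u_k\|_\varepsilon^p$.

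The delicate contribution is the nonlinear term, since $(f_2)$ applies to $F$ but not directly to $F_\alpha$. Setting $\rho=(\alpha+u^s)^{1/s}$, for $u\geq 0$ a short manipulation using $(f_2)$ at $\rho$ gives
\[
F_\alpha(u)-\theta u f_\alpha(u)\ \leq\ \theta f(\rho)\Bigl(\rho-\tfrac{u^s}{\rho^{s-1}}\Bigr)\ =\ \theta f(\rho)\,\tfrac{\alpha}{\rho^{s-1}},
\]
while for $u\leq 0$ the integrand equals the constant $F(\alpha^{1/s})$. Combining the polynomial growth (\ref{somf}) with the crucial choice (\ref{defs}), $s>\max\{2,2q-1\}$, which forces $q-s<0$, and splitting into $u\geq 1$ and $u\leq 1$, one checks that $F_\alpha(u)-\theta u f_\alpha(u)\leq C_\alpha$ uniformly in $u\in\R$. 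Together with $|\int_\Omega h u_k\,dx|\leq C\|u_k\|_\varepsilon$ and the trivial bound $\langle J'_{\varepsilon,\alpha,h}(u_k),u_k\rangle=o(\|u_k\|_\varepsilon)$, this yields $\|u_k\|_\varepsilon^p\leq C(1+\|u_k\|_\varepsilon)$, so $(u_k)$ is bounded in $\w$.

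To conclude, I would extract a subsequence $u_k\rightharpoonup u$ weakly in $\w$. Since $(u_k-u)$ is bounded and $J'_{\varepsilon,\alpha,h}(u_k)\to 0$ in $W^{-1,p'}$, the pairing $\langle J'_{\varepsilon,\alpha,h}(u_k),u_k-u\rangle\to 0$; in particular its $\limsup$ is nonpositive, and Theorem~\ref{s+} then yields $u_k\to u$ strongly in $\w$. I expect the main obstacle to be precisely the uniform-in-$u$ control of $F_\alpha-\theta u f_\alpha$: because $(f_2)$ is an inequality at $\rho=(\alpha+u^s)^{1/s}$ rather than at $u$, one has to exploit the specific scaling dictated by (\ref{defs}); once that estimate is in place, everything else follows from the machinery built in the preceding sections.
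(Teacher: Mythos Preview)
Your proposal is correct and follows essentially the same scheme as the paper: bound any Palais--Smale sequence via $J_{\varepsilon,\alpha,h}(u_k)-\theta\langle J'_{\varepsilon,\alpha,h}(u_k),u_k\rangle\geq(\tfrac1p-\theta)\|u_k\|_\varepsilon^p-C$, then invoke the $(S)_+$ property of Theorem~\ref{s+}. The only cosmetic difference is in bounding $F_\alpha(u)-\theta u f_\alpha(u)$: having also arrived at $\theta\alpha f(\rho)/\rho^{s-1}$, the paper observes that $t\mapsto f(t)/t^{s-1}$ is decreasing (by $(f_1)$ and $s>q$) and evaluates at $\rho=\alpha^{1/s}$ to get the clean uniform constant $\theta\alpha^{1/s}f(\alpha^{1/s})$, whereas you reach an equivalent conclusion via~(\ref{somf}) and a case split.
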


\begin{proof}
 By (\ref{defs}) and ($f_1$), $\ t\in (0,+\infty)\mapsto \frac{f(t)}{t^{s-1}}$ is a decreasing function, hence  $(f_2)$ implies that
\[F_\alpha(t)-\theta F_\alpha'(t)t\leq \theta\alpha^{\frac 1 s}f(\alpha^{\frac 1 s}) \qquad \hbox{for any } t\in \R.\]
Moreover, there is $c_1>0$ such that $\int\limits_{\O}h u\, dx\leq c_1\|u\|_\e$ and
\begin{equation}\label{disps}
\bigl(\frac 1 p -\theta\bigr)\|u\|_\e^{p}
\leq
J_{\e,\alpha,h}(u)-\theta \langle J'_{\e,\alpha,h}(u),u\rangle
+\theta\alpha^{\frac 1 s}f(\alpha^{\frac 1 s})|\O |+(1-\theta)c_1\|u\|_\e
\, .
\end{equation}
Let $c\in \R$ and $\{u_k\}$ be a sequence such that $J_{\e,\alpha,h}(u_k)\to c$ and $J'_{\e,\alpha,h}(u_k)\to 0$.
If there is $\beta >0$ such that, up to subsequences, $\|u_k\|_\e\geq \beta$, then by $(\ref{disps})$
\[\bigl(\frac 1 p -\theta\bigr)\|u_k\|_\e^{p-1}\leq\frac{\ c+\theta\alpha^{\frac 1 s}f(\alpha^{\frac 1 s})|\O |}{\beta}+(1-\theta)c_1+o(1)\]
so $\{u_k\}$ is bounded and the previous Theorem completes the proof.
\end{proof}

\bigskip

Let us state a regularity result (see \cite{gueddaveron, lieberman, cdv} and references therein).

\begin{Theorem}\label{c1reg}
	Let $B$ be a bounded subset of $\w$ and $\e >0$.
	There exist $\eta \in (0,1)$ and $K>0$ such that, for any$ \ \a \in [0,1]$ and $h\in C^1_0(\O)$ with $\|h\|_{C^1(\bar{\O})}\leq 1$,  if $u\in B$ solves
	\[- \ge^p div \bigl( (|\nabla u|^2 + \alpha)^{(p-2)/2} \nabla u \bigr)
	+
	u\,(\a+u^{2})^{(p-2)/2
	}
	=f_\a (u) +h(x)
	\]
	 then $u \in C^{1,\eta}(\bar{\O})$ and $\|u\|_{C^{1,\eta}(\bar{\O})}\leq K$.
\end{Theorem}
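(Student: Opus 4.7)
The plan is to prove the theorem in two main steps: first derive a uniform $L^{\iy}$ bound on the solution $u$, then invoke the classical $C^{1,\eta}$ regularity theory for quasilinear equations of $p$-Laplace type, checking that all constants remain controlled uniformly in $\a\in[0,1]$ and in $h$ with $\|h\|_{C^1(\bar{\O})}\leq 1$.

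For the $L^{\iy}$ estimate I would apply a Moser-type iteration. The subcritical bound (\ref{somf}) together with the very definition of $f_\a$ (as exploited in the proof of Lemma \ref{vicino}) yields $|f_\a(t)|\leq C(1+|t|^{q-1})$ for every $t\in\R$ and $\a\in[0,1]$, with $C$ independent of $\a$. The lower order term $u(\a+u^2)^{(p-2)/2}$ is in turn dominated by $|u|^{p-1}$ up to a multiplicative constant uniform in $\a\in[0,1]$. Since $B$ is bounded in $\w$ and $\|h\|_{\iy}\leq 1$, testing the equation with powers $|u|^{p(\gamma_k-1)}u$ along a suitable sequence $\gamma_k\to +\iy$ (after a standard truncation) gives $\|u\|_{\iy}\leq M$ for a constant $M$ depending only on $p,\e,\O$, the $\w$-diameter of $B$ and on $C$, hence uniform with respect to $\a$ and $h$.

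Once the $L^{\iy}$-bound is fixed, I would rewrite the equation in divergence form
\[-\e^p\dvg A_\a(\n u)= g_\a(x),\]
with $A_\a(\x)=(\a+|\x|^2)^{(p-2)/2}\x$ and $g_\a(x)=f_\a(u(x))+h(x)-u(x)(\a+u(x)^2)^{(p-2)/2}$, so that $\|g_\a\|_{\iy}$ is also bounded by a constant $M_1$ uniformly in $\a\in[0,1]$ and $u\in B$. A direct computation yields the standard $p$-structure conditions
\[\l(\a+|\x|^2)^{(p-2)/2}|\zeta|^2\leq \partial_\x A_\a(\x)\zeta\cdot\zeta\leq \L(\a+|\x|^2)^{(p-2)/2}|\zeta|^2 \qquad \forall\,\x,\zeta\in\R^N,\]
with constants $0<\l\leq\L$ depending only on $p$. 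Since $\de\O$ is smooth, the boundary $C^{1,\eta}$ regularity theorem of Lieberman \cite{lieberman}, complemented by the interior estimates recalled in \cite{gueddaveron,cdv}, then furnishes $\eta\in(0,1)$ and $K>0$, depending only on $p,\e,\O,M$ and $M_1$, such that $\|u\|_{C^{1,\eta}(\bar{\O})}\leq K$.

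The main obstacle is the uniformity of $\eta$ and $K$ as $\a$ varies in $[0,1]$, since the equation becomes singular at $\a=0$. This uniformity is, however, built into the regularized $p$-structure: the ellipticity ratio $\L/\l$ and every constant appearing in Lieberman's proof depend only on $p$ and on $\|g_\a\|_{\iy}$, and not on any positive lower bound of the weight $(\a+|\x|^2)^{(p-2)/2}$. As already exploited in \cite{cdv} for the same family of regularized operators, the resulting bound therefore holds uniformly for every $\a\in[0,1]$, covering both the non-degenerate and the singular limit case in a single estimate.
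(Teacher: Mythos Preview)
The paper does not supply a proof of this theorem: it is simply stated with a parenthetical reference to \cite{gueddaveron, lieberman, cdv}. Your two-step outline (uniform $L^\infty$ bound via Moser iteration, followed by Lieberman's boundary $C^{1,\eta}$ estimate with the structure constants checked to be uniform in $\a\in[0,1]$) is exactly the standard argument those references encode, and your verification of the growth bound on $f_\a$ and of the ellipticity ratio $\L/\l=1/(p-1)$ for $A_\a(\xi)=(\a+|\xi|^2)^{(p-2)/2}\xi$ is correct.
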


\bigskip

Now let us consider a critical point $u_0$ of $J_{\varepsilon,\a ,h}$. Assume that $\a$ and $h$ satisfy the assumptions of the previous theorem, so that $u_0 \in C^{1,\eta}(\bar{\O})$, for some $\eta \in (0, 1)$. 

\noindent It is crucial to give a notion of Morse index, which is not standard, as $J_{\varepsilon,\a ,h}$ is not  $C^{2}\!$.
If $\a>0$ and $u \in W^{1,\infty}(\O)$, let us define on $W^{1,2}_0(\O)$  the following bilinear form

\[B_\a(u)(z_1,z_2)=\int\limits_{\O}\Psi_{\e,\a}''(\nabla u)[\nabla z_1,\nabla z_2] +\int\limits_{\O}b_\a'(u) z_1\,z_2\]

where $\Psi_{\e,\a}$ and $b_\a$ are defined by $(\ref{psi+l})$,
hence 

\[
\begin{split}
&\hspace{2cm}B_\a(u)(z_1,z_2)=\e ^p\int\limits_{\O}\frac{(\nabla z_1/\nabla z_2)}{(\a +|\nabla u|^2)^{\frac{2-p}{2}}}\\
&\hspace{1.5cm}-\e ^p(2-p)\int\limits_{\O}\frac{(\nabla u/\nabla z_1)(\nabla u/\nabla z_2)}{(\a +|\nabla u|^2)^{\frac{4-p}{2}}}
\\
&\hspace{1cm}+\int\limits_{\O}\frac{\a +(p-1)u^2}{(\a +u^2)^{\frac{4-p}{2}}}z_1\,z_2\\
&\hspace{0.5cm}-\int\limits_{\O}f'\left(\left(\a +(u^+)^s\right)^{1/s}\right) 
\frac{(u^+)^{2s-2}}{\ \left(\a +(u^+)^s\right)^{\frac{2s-2}{s}}}z_1\,z_2\\
&-\int\limits_{\O}\frac{f\bigl(\left(\a +(u^+)^s\right)^{1/s}\bigr)\a (s-1)(u^+)^{s-2}}{\ \left(\a +(u^+)^s\right)^{\frac{2s-1}{s}}}z_1\,z_2.
\end{split}
\]

In addition, we introduce $Q^{\a}_u:W^{1,2}_0(\O)\to \R$ defined by
\[Q^{\a}_u(z)=B_\a(u)(z,z).\]
The definition of $B_\a(u)$ is inspired by the formal second derivative of $J_{\varepsilon,\a,h}$ in $u$.
Let us point out that, as $p < 2$, for any $u \in W^{1,\infty}(\O)$,  $B_\a(u)$ and $Q^{\a}_u$ are well defined on $W^{1,2}_0(\O)$, but not on $\w$. 
\newline In particular, $Q^{\a}_{u_0}$ is a smooth quadratic form on $W^{1,2}_0(\O)$ and we
define the Morse index of $J_{\varepsilon,\a ,h}$ at $u_0$ (denoted by $m(J_{\varepsilon,\a ,h} , u_0))$ as the supremum of the dimensions
of the linear subspaces of $W^{1,2}_0(\O)$ where $Q^{\a}_{u_0}$ is negative definite. Analogously, the large Morse index of $J_{\varepsilon,\a ,h}$ at $u_0$ (denoted by $m^{*}(J_{\varepsilon,\a ,h} , u_0))$ is the supremum of the dimensions of the linear subspaces
of $W^{1,2}_0(\O)$ where $Q^{\a}_{u_0}$ is negative semidefinite. We clearly have $m(J_{\varepsilon,\a ,h} , u_0) \leq m^{*}(J_{\varepsilon,\a ,h} , u_0) <
+\infty$.
This notion of Morse index is crucial in order to get estimates of the critical groups.

\bigskip

Indeed, the following result gives a description of the critical groups of the functional $J_{\varepsilon,\a ,h}$ at $u_0$ in terms of the Morse index. The proof derives directly from
\cite[Theorem~2.3]{cdv} (see also \cite[Theorem 1.3]{cdvlin}).
\bigskip

\begin{Theorem}\label{cornerstone}
	Let $h\in C^1(\bar{\Omega})$ and $\varepsilon,\, \alpha > 0$.
	If $u_0$ is a critical point of $J_{\varepsilon,\a ,h}$ and \[m(J_{\varepsilon,\a,h},u_0)=m^*(J_{\varepsilon,\a ,h},u_0),\]
	then $u_0$ is an isolated critical point of $J_{\varepsilon,\a ,h}$ and 
	
	\[
	\left\{
	\begin{array}{ll}
	C_m(J_{\varepsilon,\a ,h},u_0) \approx \K
	&\text{if $\ m = m(J_{\varepsilon,\a,h},u_0)$}\,,\\
	\noalign{\medskip}
	C_m(J_{\varepsilon,\a ,h}, u_0) = \{0\}
	&\text{if $\ m \neq m(J_{\varepsilon,\a ,h},u_0)$}\,.
	\end{array}
	\right.
	\]
	
\end{Theorem}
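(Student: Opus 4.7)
The plan is to reduce the statement directly to \cite[Theorem~2.3]{cdv}, which characterizes the critical groups of functionals of the form of $\jh$ at critical points whose formal Hessian is nondegenerate in the sense of the coinciding Morse and large Morse indices. My job is therefore to verify that the present hypotheses feed into the cited theorem, and to outline the role of each ingredient.

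First, I would set up the regularity needed to apply the cited result. Since $\a>0$, the density $\psa$ is $C^2$ and uniformly elliptic on $\R^N$, while $G_\a,\, F_\a\in C^2(\R)$. As $u_0$ is a critical point of $\jh$, it weakly solves the Euler--Lagrange equation
\[
-\e^p \dvg\bigl((\a+|\nabla u_0|^2)^{(p-2)/2}\nabla u_0\bigr)+u_0(\a+u_0^2)^{(p-2)/2}=f_\a(u_0)+h,
\]
so Theorem~\ref{c1reg} yields $u_0\in C^{1,\eta}(\bar{\O})\subset W^{1,\infty}(\O)$. In this regularity, $B_\a(u_0)$, and hence $Q^\a_{u_0}$, are well defined and continuous on $W^{1,2}_0(\O)$.

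Second, the hypothesis $m(\jh,u_0)=m^*(\jh,u_0)$ translates into the quadratic form $Q^\a_{u_0}$ being nondegenerate on $W^{1,2}_0(\O)$. Because $\a>0$ forces the principal part of $B_\a(u_0)$ to be uniformly elliptic with bounded $C^{0,\eta}$ coefficients, while the zeroth-order terms act compactly on $W^{1,2}_0(\O)$, the form $Q^\a_{u_0}$ is Fredholm with finite Morse index $m=m(\jh,u_0)$, and admits an $L^2$-orthogonal splitting $W^{1,2}_0(\O)=V^-\oplus V^+$ with $\dim V^-=m$, on which $Q^\a_{u_0}$ is negative definite and coercive respectively.

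The crux of the argument --- which is the content of \cite[Theorem~2.3]{cdv}, to which I would appeal directly --- is to transfer this finite-dimensional spectral splitting from the Hilbert space $W^{1,2}_0(\O)$ back to a local description of the sublevel sets of $\jh$ in the Banach topology of $\w$. This is the main technical obstacle: $\jh$ is only $C^1$ on $\w$ (so no classical Morse lemma applies) and $Q^\a_{u_0}$ is not even defined on $\w$. The construction in \cite{cdv} bypasses this mismatch by exploiting the $C^{1,\eta}$ regularity of $u_0$ to work in a $C^1(\bar{\O})$-neighborhood of $u_0$, inside which regularity is preserved, and by building a pseudo-gradient deformation whose existence and compactness properties are guaranteed by the $(S)_+$ property established in Theorem~\ref{s+}. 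This deformation realizes the negative/positive splitting at the level of topology, shows that $u_0$ is an isolated critical point, and identifies its critical groups with those of a nondegenerate Morse critical point of index $m$, yielding $C_m(\jh,u_0)\cong \K$ and $C_q(\jh,u_0)=\{0\}$ for $q\neq m$.
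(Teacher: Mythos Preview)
Your proposal is correct and follows exactly the paper's approach: the paper simply states that the result ``derives directly from \cite[Theorem~2.3]{cdv}'' (with a pointer also to \cite[Theorem~1.3]{cdvlin}), and you have spelled out the verification of the hypotheses (regularity of $u_0$ via Theorem~\ref{c1reg}, well-posedness of $B_\a(u_0)$ on $W^{1,2}_0(\O)$, and the $(S)_+$ property from Theorem~\ref{s+}) that feed into that citation.
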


\bigskip

\begin{Rmk}\label{mu0}
	If the assumptions of the previous theorem are satisfied, the multiplicity of $u_0$ is one, namely, according to Definition~\ref{poincare}, $\ i(J_{\varepsilon,\a,h},u_0)(1)=1$.
\end{Rmk}

In order to prove Theorem~\ref{finale}, we recall an abstract theorem, proved in \cite{CLV} (see also \cite{benci} and \cite{chang}).

\begin{Theorem}\label{inter}
	Let $A$ be an open subset of a Banach space $X$. Let $f$  be a $C^1$
	functional on $A$ and $u \in A$ be an isolated critical point of
	$f$. Assume that there exists an open neighborhood $U$ of $u$ such
	that $\overline U \subset A$, $u$ is the only critical point of $f$
	in $\overline U$ and $f$ satisfies the Palais--Smale condition in
	$\overline U$.\par \noindent Then there exists $\bar \mu >0$ such
	that, for every $g \in C^1(A, \R)$ satisfying
	\begin{itemize}
		\item{} $\|f-g\|_{C^1(A)} < \bar \mu$,
		\item{}
		$g$ satisfies the Palais--Smale condition in $\overline U$,
		\item{}
		$g$ has a finite number $\{u_1,u_2,\dots,u_m \}$ of critical points
		in $U$,
	\end{itemize}
	\noindent we have
	\[
	\sum_{j=1}^{m} i(g,u_j)(t) = i(f,u)(t) + (1+t)Q(t),
	\]
	where $Q(t)$ is a formal series with coefficients in $\N \cup \{+
	\infty\}$.
\end{Theorem}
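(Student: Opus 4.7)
The plan is a straight dichotomy. If $(P_\e)$ itself has at least $2\mathcal{P}_1(\Omega)-1$ distinct solutions the statement is immediate, so I focus on the complementary case, where the set of solutions of $(P_\e)$ is a finite collection $\bar u_1,\dots,\bar u_k$ with $k<2\mathcal{P}_1(\Omega)-1$. Theorem~\ref{aim} still gives $\sum_{j=1}^k i(I_\e,\bar u_j)(1)\geq 2\mathcal{P}_1(\Omega)-1$, so some solutions must carry multiplicity strictly greater than one, and the task reduces to exhibiting, near each $\bar u_j$, at least $i(I_\e,\bar u_j)(1)$ distinct positive solutions of $(P_n)$ for $n$ large.

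I fix pairwise disjoint balls $U_j\subset W^{1,p}_0(\Omega)$ around each $\bar u_j$, each containing no further critical point of $I_\e$. Theorem~\ref{inter} applied with $f=I_\e$ on $U_j$ supplies $\bar\mu_j>0$ such that every $C^1$ functional $g$ within $C^1(U_j)$-distance $\bar\mu_j$ of $I_\e$, satisfying (P.S.) on $\overline{U_j}$ with finitely many critical points in $U_j$, obeys
\[
\sum_{u\in\,\mathrm{Crit}(g)\cap U_j} i(g,u)(t) \,=\, i(I_\e,\bar u_j)(t) + (1+t)Q_j(t).
\]
By Lemma~\ref{vicino} I take $\a_n$ so small and, simultaneously, $h_n$ so small in $C^1(\overline\Omega)$ that $\|J_{\e,\a_n,h_n}-I_\e\|_{C^1(U_j)}<\bar\mu_j$ for every $j$; Corollary~\ref{lps} supplies the required (P.S.) condition for $J_{\e,\a_n,h_n}$.

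The delicate step is ensuring that each critical point of $J_{\e,\a_n,h_n}$ in $\bigcup_j U_j$ has multiplicity exactly one, so that evaluating the Morse identity at $t=1$ produces at least $i(I_\e,\bar u_j)(1)$ \emph{distinct} critical points in $U_j$ rather than merely the right sum of multiplicities. Theorem~\ref{c1reg} places every such critical point in a uniform $C^{1,\eta}(\overline\Omega)$-bounded set, and Theorem~\ref{cornerstone} together with Remark~\ref{mu0} then reduces the problem to arranging the non-degeneracy $m(J_{\e,\a_n,h_n},u)=m^{*}(J_{\e,\a_n,h_n},u)$, i.e.\ the quadratic form $Q^{\a_n}_u$ on $W^{1,2}_0(\Omega)$ has trivial kernel. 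I would obtain this through a Sard--Smale type perturbation argument: for fixed $\a_n>0$ the bilinear form $B_{\a_n}(u)$ is a uniformly elliptic form on the Hilbert space $W^{1,2}_0(\Omega)$, and the $h$'s of small $C^1(\overline\Omega)$-norm for which every critical point of $J_{\e,\a_n,h}$ in $\bigcup_j U_j$ is $Q^{\a_n}$-non-degenerate are dense, allowing the selection of $h_n\to 0$ with this property. This generic non-degeneracy is the main obstacle, since the classical Marino--Prodi perturbation is unavailable in the present Banach-space setting; the workaround exploits the fact that, although the energies live on $W^{1,p}_0(\Omega)$, the quadratic form to be perturbed lives on the Hilbert space $W^{1,2}_0(\Omega)$.

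It remains to verify positivity. Uniform $C^{1,\eta}$ bounds and the Ascoli--Arzel\`a theorem extract, from any sequence of new critical points, a $C^1(\overline\Omega)$-convergent subsequence; passing to the limit in the Euler--Lagrange equations and using the $C^1(U_j)$-convergence of $J'_{\e,\a_n,h_n}$ to $I'_\e$, the limit is a critical point of $I_\e$ in $\overline{U_j}$, hence coincides with $\bar u_j$. Since the interior sphere condition makes Hopf's lemma applicable, $\bar u_j>0$ in $\Omega$ with $\partial_\nu\bar u_j<0$ on $\partial\Omega$; the $C^1$-closeness then forces the approximating critical points to be strictly positive in $\Omega$ for $n$ large, so they are genuine solutions of $(P_n)$. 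Summing over $j$ yields at least $\sum_j i(I_\e,\bar u_j)(1)\geq 2\mathcal{P}_1(\Omega)-1$ distinct positive solutions, completing the proof.
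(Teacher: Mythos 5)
Your proposal does not address the statement you were asked to prove. The statement is Theorem~\ref{inter}: an abstract perturbation result asserting that if $u$ is an isolated critical point of a $C^1$ functional $f$ on an open set of a Banach space, then for every $g$ sufficiently $C^1$-close to $f$ (satisfying (P.S.) and having finitely many critical points in $U$) the sum of the Morse polynomials of the critical points of $g$ in $U$ equals $i(f,u)(t)+(1+t)Q(t)$. What you have written is instead a sketch of the proof of Theorem~\ref{finale} (the dichotomy about $(P_\e)$ versus the approximating problems $(P_n)$), and in the course of it you explicitly \emph{invoke} Theorem~\ref{inter} (``Theorem~\ref{inter} applied with $f=I_\e$ on $U_j$ supplies $\bar\mu_j>0$\dots''). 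A proof that uses the statement to be proved as a black box proves nothing about that statement; the entire argument for Theorem~\ref{inter} is missing. (For the record, the paper itself does not reprove it either: it is quoted from \cite{CLV}, see also \cite{benci} and \cite{chang}.)

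To actually prove Theorem~\ref{inter} you would need a different kind of argument, living entirely at the abstract level: first show that, since $u$ is the only critical point of $f$ in $\overline U$ and $f$ satisfies (P.S.) there, $\|f'\|$ is bounded below on $\overline U\setminus V$ for any small neighborhood $V$ of $u$, so that for $\|f-g\|_{C^1(A)}$ small all critical points of $g$ in $\overline U$ are confined to $V$ and the relevant sublevel sets of $g$ near the level $c=f(u)$ deform onto those of $f$; then compare the relative cohomology groups defining $C_q(f,u)$ with $H^q(g^b\cap U, g^a\cap U)$ for suitable regular levels $a<c<b$, and conclude by applying the Morse relation (Theorem~\ref{morse-topl}) to $g$ restricted to $U$ between those levels. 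None of these steps appears in your proposal. The material you did write overlaps with the paper's proof of Theorem~\ref{finale}, but even there it is looser than the paper (the genericity of the nondegeneracy is obtained in the paper via a finite-dimensional Lyapunov--Schmidt reduction to the functionals $\varphi_i$ on $V^n$ and Sard's lemma in $V'$, not by a Sard--Smale argument on $W^{1,2}_0(\O)$); in any case it is not a proof of the theorem under review.
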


\section{Interpretation of multiplicity: number of distinct solutions of approximating problems}

Let $\e^*$ be
defined by Theorem \ref{aim} and $\e \in (0,\e^*)$. If $(P_\e)$ has
at least $\1$ distinct solutions, then the assert is proved, otherwise $\ie$ has a finite number of isolated critical points
$\overline u_1, \dots \overline u_k$ having multiplicities
$\overline m_1, \dots \overline m_k$ where
\[2\leq k < \1 \qquad \hbox{and} \qquad \sum_{j=1}^k \overline m_j \geq \1.\]
Let $\a_n \to 0^+$ and $R>0$ be such that
$\overline{B_R(\overline{u}_i)} \cap \overline{B_R(\overline{u}_j)}=\emptyset$, when $i\neq j$. We set
\begin{equation}\label{a}
	A=\bigcup^{k}_{j=1}B_{R}(\overline{u}_j)\,.
\end{equation}

If $\jn$, defined by (\ref{ja}), has
at least $\1$ critical points, then we
choose $h_n=0$, otherwise $\jn$ has $k_n<\1$ isolated critical
points $u_1,\dots u_{k_n}$, having multiplicities $m_1,\dots m_{k_n}$.
For simplicity, we will often omit the dependence on~$n$ of $u_i$ and their related objects. If $n$ is sufficiently large, by (\ref{vicinalfa}) and Theorem~\ref{inter}, $k_n\geq k$ and

\begin{equation}\label{summ}
\sum_{i=1}^{k_n} m_i\geq \sum_{j=1}^k \overline m_j \geq \1.
\end{equation}

 Reasoning as in \cite{cvbn} (p.11-12),
 we obtain that, for any $i=1,\dots k_n$, there are $V_i$ and $W_i$ subspaces of $\w$ 
such that

\begin{enumerate}
  \item  $\w=V_i\oplus W_i$;
  \item $V_i\subset C^1(\overline \O)$ 
  and $\, \dim V_i=m^*(\jn,u_i)<+\infty$;
  \item $V_i$ and $W_i$ are orthogonal in $L^2(\O)$;
  \item 
  $Q^{\a_n}_{u_i}(w) > 0 \quad$
  for any $w\in W_i\setminus\{0\}$.
  \end{enumerate}

\bigskip

Setting
\begin{equation}\label{vw}
V^n=V_1+V_2+\dots +V_{k_n} \quad \text{and}
\quad W^n=\bigcap_{i=1}^{k_n}W_i,
\end{equation}
we still have:

\begin{itemize}
	\item $\w=V^n\oplus W^n\,$;
	\item $V^n\subset C^1(\overline{\O})\ $ is finite dimensional \ and \ $W^n\subset W_i$ for any $i=1,\dots k_n$;
	\item $\int_{\O}vw =0$  for any $v\in V^n$, $w\in W^n$.
\end{itemize}

 \bigskip
 
\begin{Rmk}\label{nbar}
	We see that $\dim V^n\geq 1$, otherwise, for each $i=1,\dots k_n$, it should be $0=m^*(\jn,u_i)=m(\jn,u_i)\ \Rightarrow \ m_i=1$, so by (\ref{summ}) $k_n\geq \1$, while we are supposing $k_n<\1$.
 \end{Rmk}
 
 \bigskip
 
 In this setting the following two results hold (see Theorem 3.8 and Lemma 3.9 in \cite{cvbn}).
 
 \begin{Theorem}
 	There exist $r,\delta, M>0$, $\beta \in (0,1]$ and $\rho \in (0,r]$ such that for any $i\in \{1,\dots k_n\}$ and
 	$v\in V^n \cap \overline{B_\rho(0)}$
 	there exists one and 
 	only one $\psi_i(v)
 	\in W^n\cap B_r(0)$ such that
 	\begin{equation}\label{44}
 		\langle \jn'(u_i+v+\psi_i(v)),w\rangle=0 \qquad \forall w\in W^n.
 	\end{equation}
 	moreover $v+\psi _i(v)\in C^{1,\beta}(\overline{\Omega})$, $\|v+\psi_i(v)\|_{C^{1,\beta}(\overline{\Omega})}\leq M$ and, denoting by
 	\[
 	U_i=u_i+(V^n\cap B_\rho(0))+(W^n\cap B_r(0)),
 	\] 
 	we have $\overline{U_i}\cap \overline{U_j}=\emptyset$ if $i\neq j$
 	and $\bigcup_{i=1}^{k_n}\overline{U_i}\subset A$,  where $A$ is the open bounded set defined by (\ref{a}).
 	\newline Finally 
 	\begin{equation}\label{tubu}
 	\bn(u_i+v+\psi_i(v))(w,w)\geq \delta \int\limits_{\Omega}|\nabla w|^2dx
 	\end{equation}
 	for every $i\in \{1,\dots k_n\}$, $v\in V^n \cap \overline{B_\rho(0)}$ and  $w \in W^n\cap \h$.
 \end{Theorem}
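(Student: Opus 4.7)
The plan is to view the assertion as a Lyapunov--Schmidt reduction. For each $i \in \{1, \dots, k_n\}$ and each $v \in V^n$ small, I would seek $\psi_i(v) \in W^n \cap B_r(0)$ as the zero of the map
\[
\Phi_i(v, \cdot) : W^n \to (W^n)^*, \qquad w \mapsto \jn'(u_i + v + w)\bigr|_{W^n}.
\]
The main obstacle is that $\jn$ is only $C^1$ on $\w$, so the classical $C^2$ implicit function theorem is unavailable. I would therefore combine two ingredients: (a) the positive definiteness of $Q^{\alpha_n}_{u_i}$ on $W^n$, upgraded to a quantitative coercivity estimate $\bn(u_i)(w,w) \geq 2\delta \int_\Omega |\nabla w|^2\, dx$ for all $w \in W^n$, which follows from a spectral/compactness argument using that $W^n$ has finite codimension inside the positive cone $W_i$ of the bilinear form (the non-principal terms of $\bn(u_i)$ define a compact perturbation on $\h$); and (b) the $(S)_+$ property of $\jn'$ supplied by Theorem~\ref{s+}.

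Existence of $\psi_i(v)$ I would obtain variationally: minimize $w \mapsto \jn(u_i + v + w)$ over $W^n \cap \overline{B_r(0)}$. Weak lower semicontinuity together with the $(S)_+$ property ensures the infimum is attained. Expanding $\jn(u_i + v + w)$ around $u_i$, the coercive bound on $\bn(u_i)$ shows $\jn(u_i + v + w)$ behaves like $\jn(u_i) + \text{(positive quadratic in }w) + O(\|v\|)$, so for $r$ fixed suitably small and then $\rho \ll r$, the minimizer must lie in the interior of $W^n \cap \overline{B_r(0)}$ and satisfies (\ref{44}). Uniqueness I would prove by a monotonicity argument: for two candidates $w_1, w_2 \in W^n \cap B_r(0)$ at the same $v$, one pairs $\jn'(u_i+v+w_1) - \jn'(u_i+v+w_2)$ with $w_1 - w_2$ and expands the difference via an integral of $\bn$ along the segment connecting the two points; shrinking $r, \rho$ allows transferring the coercive lower bound from $u_i$ to the whole segment, giving a strictly positive lower bound proportional to $\int_\Omega |\nabla(w_1 - w_2)|^2\, dx$, hence $w_1 = w_2$.

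For the regularity $v + \psi_i(v) \in C^{1,\beta}(\overline{\Omega})$ with the uniform bound $M$, I would observe that (\ref{44}) forces $u_i + v + \psi_i(v)$ to satisfy
\[
-\e^p \dvg\bigl((\a_n + |\nabla u|^2)^{(p-2)/2}\nabla u\bigr) + u(\a_n + u^2)^{(p-2)/2} = f_{\a_n}(u) + g_{i,v}(x),
\]
where $g_{i,v}$ is a finite-dimensional Lagrange-multiplier correction living in the dual of $V^n$; since $V^n \subset C^1(\overline\Omega)$ is finite dimensional, $g_{i,v}$ is uniformly bounded in $C^1(\overline\Omega)$ as $v$ varies in $V^n \cap \overline{B_\rho(0)}$, so Theorem~\ref{c1reg} yields the uniform $C^{1,\beta}(\overline\Omega)$ bound. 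The disjointness of the $\overline{U_i}$ and the inclusion $\bigcup_i \overline{U_i} \subset A$ are secured by further shrinking $\rho, r$: since the $\overline u_j$ are $2R$-separated and each $u_i$ converges in $\w$ to some $\overline u_{j(i)}$ for $n$ large, the $U_i$ become arbitrarily small neighborhoods of the corresponding $u_i$. Finally, (\ref{tubu}) is obtained by transferring the quantitative coercivity of $\bn(u_i)$ to nearby base points $u_i + v + \psi_i(v)$ via continuity of the map $u \mapsto \bn(u)(w,w)$ in the $C^{1,\beta}$-topology of $u$ (for fixed $\a_n > 0$, the coefficients of $\bn(u)$ depend continuously on $u$ in $C^{1,\beta}$); the uniform $C^{1,\beta}$ bound just established allows halving the constant from $2\delta$ to $\delta$ after a further reduction of $\rho, r$.
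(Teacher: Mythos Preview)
The paper does not prove this theorem here; it simply cites Theorem~3.8 of \cite{cvbn}. Your outline has the correct architecture for the Lyapunov--Schmidt reduction, and the ingredients you list---upgrading positive definiteness of $Q^{\alpha_n}_{u_i}$ on $W_i$ to a quantitative bound on $W^n\cap\h$ via the elliptic-plus-compact splitting of $\bn(u_i)$, the Lagrange-multiplier PDE combined with Theorem~\ref{c1reg} for the uniform $C^{1,\beta}$ estimate, and continuity of $u\mapsto \bn(u)$ at $C^1$ base points for the transfer in (\ref{tubu})---are exactly the right ones.

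Where your sketch is incomplete is the interplay between $\w$ and $\h$. For $p<2$ the form $\bn(u_i)$ and its lower bound $2\delta\int_\Omega|\nabla w|^2\,dx$ make sense only for $w\in\h$, whereas your minimisation and your uniqueness argument are posed on $W^n\subset\w$, whose generic elements need not lie in $\h$. Thus the second-order ``expansion'' of $\jn(u_i+v+w)$ you invoke to force the minimiser into the interior of $B_r(0)$ is not a priori available, and the segment integral of $\bn$ in your uniqueness step is undefined unless $w_1-w_2\in\h$. The repair is to interlace the regularity step earlier rather than after uniqueness: any $w$ satisfying (\ref{44}) solves the uniformly elliptic equation with $C^1$ right-hand side $g_{i,v}\in V^n$ that you correctly identified, so Theorem~\ref{c1reg} places it in $C^{1,\beta}(\overline\Omega)$ with a bound depending only on $r,\rho$; since $C^{1,\beta}\hookrightarrow C^1$ is compact, shrinking $r$ then makes any such $w$ small in $C^1$, and the uniqueness and coercivity-transfer arguments can be run entirely among $C^1$ functions, where $\bn$ acts as a genuine Hessian. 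The existence step cannot rest on a quadratic expansion in $\w$ either; one must produce the candidate $\psi_i(v)$ simultaneously with its $C^{1,\beta}$ regularity (for instance via a continuation in $v$ from $\psi_i(0)=0$, coupled with the a~priori $C^{1,\beta}$ bound), and this two-space bootstrap is precisely the technical content supplied in \cite{cvbn} that your plan leaves implicit.
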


\begin{Lem}
	\label{thm:phiC2}
	For any $i=1,\cdots k_n$, $\psi_i$ is continuous from
	$V^n\cap \overline{B_\rho(0)}$ in $W^n \cap C^{1}(\overline{\Omega})$ and of class $C^1$
	into $\h$. In addition,
	\begin{equation}\label{psip}
	\bn(u_i+z+\psi_i(z))(h+\langle\psi_i'(z),h\rangle,w)=0
	\end{equation}
	for any $z \in V^n\cap \overline{B_\rho(0)}$, $h\in V^n$ and
	$w \in \h$.
	
	Moreover 
	the function  $\varphi_i:V^n\cap \overline{B_\rho(0)} \to \R$  defined by
	\[\varphi_i(v)=\jn(u_i+v+\psi_i(v))\]
	is of class $C^2$	and
	\begin{equation}\label{fip}
	\langle \varphi'_i(z),h\rangle=\langle \jn'(u_i+z+\psi_i(z)),h\rangle
	\end{equation}
	\begin{equation}\label{fis}
	\langle\varphi''_i(z) h,v\rangle= B_{\alpha_n}
	\bigl(u_i+z+\psi_i(z)\bigr)(h+\psi'_i(z)h,v)\end{equation}
	for any $z \in V^n\cap \overline{B_\rho(0)}$
	and $h,v\in V^n$.
\end{Lem}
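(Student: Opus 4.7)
My strategy is to derive all assertions by applying the implicit function theorem to equation~(\ref{44}) in a Hilbert-space setting, upgrading the regularity of $\psi_i$ a posteriori via the quasilinear elliptic estimates supplied by Theorem~\ref{c1reg}. For each $i$, consider the map $\Phi_i : (V^n\cap B_\rho(0)) \times (W^n\cap B_r(0)) \to (W^n\cap\h)^{*}$ defined by $\langle \Phi_i(v,w),\zeta\rangle = \langle \jn'(u_i+v+w),\zeta\rangle$ for $\zeta \in W^n\cap\h$. Because $v+\psi_i(v)$ is uniformly bounded in $C^{1,\beta}(\overline\Omega)$ by the previous theorem, the quasilinear operator $\jn'$ linearizes, along the solution curve, through the bilinear form $\bn$ which is well defined on $\h\times\h$, precisely as in \cite[Theorem 2.3]{cdv}. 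The partial derivative of $\Phi_i$ in its second variable at $(v,\psi_i(v))$ is therefore $\bn(u_i+v+\psi_i(v))$ restricted to $W^n\cap\h$, and (\ref{tubu}) shows that this restriction is coercive, hence a Hilbert-space isomorphism onto its target. The standard implicit function theorem then yields the $C^1$ regularity of $\psi_i$ into $\h$, and (\ref{psip}) follows by differentiating (\ref{44}) in $z$ along $h\in V^n$.

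For the continuity of $\psi_i$ into $W^n\cap C^1(\overline\Omega)$, I combine this $\h$-continuity with the uniform bound $\|v+\psi_i(v)\|_{C^{1,\beta}(\overline\Omega)} \leq M$ supplied by the previous theorem. If $v_k\to v$ in $V^n$, then $\{\psi_i(v_k)\}$ is precompact in $C^1(\overline\Omega)$ by Arzel\`a--Ascoli; any $C^1$-limit coincides with $\psi_i(v)$ since convergence also holds in $\h$, so the entire sequence converges to $\psi_i(v)$ in $C^1(\overline\Omega)$.

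The formulas for $\varphi_i$ then follow by the chain rule: one has $\langle\varphi_i'(z),h\rangle = \langle\jn'(u_i+z+\psi_i(z)),h+\psi_i'(z)h\rangle$, and since $\psi_i'(z)h\in W^n$ while $u_i+z+\psi_i(z)$ solves~(\ref{44}), the $W^n$-component vanishes and one obtains (\ref{fip}). Differentiating (\ref{fip}) once more in $z$, the formal second variation of $\jn$ at $u_i+z+\psi_i(z)$ is $\bn(u_i+z+\psi_i(z))$, which produces (\ref{fis}); continuity of $z\mapsto \bn(u_i+z+\psi_i(z))$ as a bounded bilinear form on $\h$, guaranteed by the $C^1(\overline\Omega)$-continuity of $\psi_i$ together with the explicit expression for $\bn$, then upgrades $\varphi_i$ from $C^1$ to $C^2$. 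The main obstacle is, as throughout this work, the absence of a genuine second derivative of $\jn$ on $\w$: the whole argument must be executed in $\h$, with the surrogate role of the Hessian played by $\bn$, which is only well defined because the functions involved enjoy $W^{1,\infty}$-regularity via Theorem~\ref{c1reg}.
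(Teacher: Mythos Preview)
The paper does not give its own proof here; it simply refers to \cite[Lemma~3.9]{cvbn}. Your outline captures the correct strategy and contains all the essential ingredients used there.

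There is, however, one genuine technical gap. You invoke the implicit function theorem in $\h$, asserting that the partial $w$-derivative of $\Phi_i$ at $(v,\psi_i(v))$ is the restriction of $\bn(u_i+v+\psi_i(v))$ to $W^n\cap\h$. But the implicit function theorem requires $\Phi_i$ to be $C^1$ on an \emph{open} neighborhood of that point in $V^n\times(W^n\cap\h)$, not merely along the graph of $\psi_i$. This fails: for $\a>0$ and $p<2$ the Nemytskii map $g\mapsto(\a+|g|^2)^{(p-2)/2}g$ is globally Lipschitz from $L^2(\O;\R^N)$ to itself but is \emph{not} Fr\'echet differentiable there (a nonaffine Nemytskii operator is never Fr\'echet differentiable between Lebesgue spaces of the same exponent). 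Hence $\jn'$ is not $C^1$ as a map from $\h$ to its dual, and $\bn(u)$ serves as a genuine derivative only when $u\in W^{1,\infty}(\O)$ --- which, as you correctly observe, holds along the graph of $\psi_i$ but not on a full $\h$-neighborhood of it.

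The remedy is to reverse your two steps. Establish first the continuity of $\psi_i$ into $C^1(\overline\O)$ by your Arzel\`a--Ascoli argument combined with the uniqueness in~(\ref{44}). Then prove differentiability into $\h$ directly: for $z,z+th\in V^n\cap\overline{B_\rho(0)}$ the path $t\mapsto u_i+z+th+\psi_i(z+th)$ lies in a bounded set of $C^{1,\beta}(\overline\O)$, so one may expand the identity $\langle\jn'(u_i+z+th+\psi_i(z+th))-\jn'(u_i+z+\psi_i(z)),w\rangle=0$ for $w\in W^n\cap\h$ through $\bn$ via the mean value theorem along this path; the uniform coercivity~(\ref{tubu}) then lets you pass to the limit of $t^{-1}\bigl(\psi_i(z+th)-\psi_i(z)\bigr)$ in $\h$ and identify the derivative by~(\ref{psip}). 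Your derivations of~(\ref{fip}) and~(\ref{fis}), and your argument for the $C^2$ regularity of $\varphi_i$ via the $C^1(\overline\O)$-continuity of the coefficients of $\bn$, are then correct as written.
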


\vspace{1cm}

Our aim is to build a suitable perturbation of $\jn$,  such that all its critical points have multiplicity one.
\newline Let us denote by $V=V^n$ and $W=W^n$ the spaces introduced in (\ref{vw}) and let $\{e_1, \dots e_{\overline{n}}\}$ be an
$L^2$-orthonormal basis of
 $V$, where $\overline{n}=\dim V\geq 1$, as seen in Remark \ref{nbar}. Denoting by $V'$ the dual space of $(V, \|\ \|_{L^2})$, for any $v' \in V'$ we introduce $l_{v'}=\sum_{k=1}^{\nb}\langle v',e_k \rangle e_k \in V$ and
 $L_{v'}:\w \to \mathbb{R}$
 the functional defined by
 \[L_{v'}(u)=\int_\O l_{v'}u \, dx\, .\]

 By construction, for any $i\in \{1, \dots, k_n\}$ $u_i$ is the only critical point of $\jn$ in $U_i$ and $\overline{U_i}\subset A$. So let $\bar\m_i$ be defined by Theorem \ref{inter}
 and put $\m=\min\{\bar\m_1, \dots \bar\m_{k_n}\}$. 
 
 We prove that there is $\gamma >0$ such that
 
 \begin{equation}\label{normav}
 	v'\in V'\  \hbox{and} \ \|v'\|_{V'}<\gamma \quad \Rightarrow \quad  
 	\|l_{v'}\|_{C^1(\overline{\O})}<1/n \quad \hbox{and} \quad 
 	\|L_{v'}\|_{C^1(A)}< \mu.
 \end{equation}

 In fact, as  $V$ is finite dimensional, there is $c_n>0$ such that
 \[ 
  \|v\|_{C^1(\overline{\O})} \leq c_n \|v\|_{L^2} \qquad \forall v\in V.	
 	\]
 As $v'\in V'\!$, we have   \[\|l_{v'}\|_{C^1(\overline{\O})}=\|\sum_{k=1}^{\nb}\langle v',e_k \rangle e_k \|_{C^1(\overline{\O})}
 \leq \sum_{k=1}^{\nb}\| v'\|_{V'} \|e_k\|_{L^2}\, \|e_k \|_{C^1(\overline{\O})}
 \leq \nb c_n\| v'\|_{V'}\,.\]
 
 Moreover, $A$ being bounded,  there is $c_A>0$ such that
 \[\|L_{v'}\|_{C^1(A)}\leq c_A \|l_{v'}\|_{C^1(\overline{\O})}\,.
 \]

Therefore we get (\ref{normav}) by choosing $ \gamma= 
{\frac{1}{\nb c_n}\min \left\{\frac{1}{n},\, \frac{\mu}{c_A}\right\}}$.

\smallskip

 Set $\gamma_1= \gamma/k_n
 $. Applying Sard's Lemma to $\varphi_i':V\to V'$, there exists
 $v'_1\in V'$ such that  $\|v'_1 \nv<\g_1$ and if $\varphi'_1(v)=v'_1$,
 then $\varphi''_1(v)$ is an isomorphism.
 Moreover there is $\b_1>0$ such that if $v'\in V'$, $\|v'\nv \leq
 \b_1$ and $\varphi'_1(v)=v'_1 +v'$, then $\varphi''_1(v)$ is an isomorphism.

 \noindent Analogously, for $i=2, \dots k_n$, there exist $\b_i>0$,
 $\g_i=\min\{\g_{i-1},  \b_{i-1}/(k_n-i+1)\}$ and $v'_i\in V'$
 such that  $\|v'_i \nv<\g_i$ and if $v'\in V'$, $\|v'\nv \leq \b_i$ and
 $\varphi'_i(v)=v'_1+ \dots v'_i +v'$, then $\varphi''_i(v)$ is an isomorphism.
 
 So, denoting by $\overline{v}'_n=v'_1+ \dots v'_{k_n}$, $\ h_n=l_{\overline{v}'_n}\ $
 and $\ J_n=J_{\varepsilon,\a _n,\,h_n}$,  (\ref{normav}) shows that
 \[\|\overline{v}'_n\|< 
 \g \ \Rightarrow \ \|h_n\|_{C^1(\overline{\O})} 
 <1/n \quad \text{and} \quad \|J_n-\jn\|_{C^1(A)} 
 <\mu.
 \]
 
Solutions to 

\[ \ (P_n)\ \left\{
\begin{array}{ll}
- \ge^p div \bigl( (|\nabla u|^2 + \alpha_n)^{(p-2)/2} \nabla u \bigr)
+
u\,(\a _n+u^{2})^{(p-2)/2
}&  \\

=\fn (u) +h_n
& \hbox{in} \ \Omega \\

u>0 & \hbox{in} \ \Omega \\
u=0 & \hbox{on} \ \partial \Omega
\end{array}
\right.
\]

are critical points of the functional $J_n$.

\bigskip

We will prove that, when $\tilde{u}\in \w$ and $\bar{z}\in \h$, we have

\begin{equation} \label{+clm}
	 J'_n(\tilde{u})=0 \quad  \text{and} \quad B_{\a _n}(\tilde{u})(\bar{z}, \cdot)=0
	\qquad \Rightarrow \qquad \bar{z}=0.
\end{equation}

Indeed, since $\tilde{u}$ is a critical point of $J_n$, (\ref{vicinh}) and Corollary~\ref{lps} assure that $\tilde{u} \in U_i$, for a suitable $i\in \{1, \dots k_n\}$  and for $n$ large enough. In particular, there are $\tilde v \in V\cap B_\rho(0)$ and $\tilde w \in W\cap B_r(0)$ such that $\tilde u=u_i+\tilde v+\tilde w$.
\newline Considering that $V$ and $W$ are orthogonal in $L^2(\O)$,  
\[\langle\jn'(u_i+\tilde v+\tilde w),w\rangle=\langle J'_n(\tilde u),w\rangle +\int h_nw=0 \qquad \forall w \in W.\]
Therefore, by (\ref{44}), $\tilde{w}=\psi_i(\tilde{v})$.

\medskip

For each arbitrary $v\in V$
\[\int_{\O}h_nv =\int_{\O}\sum_{k=1}^{\nb}\langle \overline{v}'_n,e_k \rangle e_k v
=\langle \overline{v}'_n,\sum_{k=1}^{\nb}(\int_{\O}e_k v)e_k\rangle
=\langle \overline{v}'_n,v\rangle
\]
thus, by (\ref{fip}),
$
\langle \varphi'_i(\tilde{v}),v\rangle=\langle \jn'(\tilde{u}),v\rangle
=\langle J'_n(\tilde{u}),v\rangle + \int_{\O}h_nv
=\langle \overline{v}'_n,v\rangle
$.
Hence
$\ \varphi'_i(\tilde{v})=\overline{v}'_n=v'_1+ \dots v'_{k_n}$ and, by construction,
\begin{equation}\label{is}
\varphi''_i(\tilde v) \hbox{ is an isomorphism}.
\end{equation}

Let us write $\bar z=\bar v+\bar w$, where $\bar v \in V$ and $\bar w \in W$.
If $h\in V$, combining (\ref{fis}) and (\ref{psip}),  we get
\[\langle \varphi''_i(\tilde{v})h,\bar v\rangle
=\bn(\tilde u)\bigl(h+\langle\psi'_i(\tilde v),h\rangle,\bar v)
=\bn(\tilde u)\bigl(h+\langle\psi'_i(\tilde v),h\rangle,\bar z).
\]
As we are assuming that $B_{\a _n}(\tilde{u})(\bar{z}, \cdot)=0$,
\[\langle \varphi''_i(\tilde{v})h,\bar v\rangle=0 \qquad \quad \forall h\in V\] 
then, by (\ref{is}),  $\bar v$ must be $0$, so that $\bar z=\bar w \in W$.

Recalling (\ref{tubu}), we see that
\[0=B_{\a _n}(\tilde{u})(\bar{w}, \bar{w})\geq \delta \int\limits_{\Omega}|\nabla \bar{w}|^2,\]
hence $\bar w=0$ and this proves (\ref{+clm}).

\bigskip

In other words, if $\tilde{u}$ is a critical point of $J_n$, then $m^*(J_n,\tilde{u})=m(J_n,\tilde{u})$.
Consequently we can apply Theorem \ref{cornerstone},  getting that the multiplicity of every critical point of $J_n$ is one. Hence, by Theorem~\ref{inter} and (\ref{summ}), $J_n$ has at least $\tilde u_n^1,..,
\tilde u_n^{\1}$ distinct critical points.
 It remains to be proved that any
 $\tilde u_n^i$ is positive.
 From Theorem~\ref{c1reg} the critical points of $J_n$ are uniformely bounded in $C^{1, \eta}(\overline
 \Omega)$, thus, up to subsequences, $\tilde u_n^i$ converges in
 $C^1(\overline \Omega)$ to $\bar u_j$, for $n \to +\infty$. As $\bar u_j$ solves $(P_\e)$, by  Theorem 5 in \cite{vazquez} we infer
 that $\bar u_j>0$ and $\frac{\partial \bar u_j}{\partial \nu}(x_0)
 >0$, where $x_0 \in \partial \Omega$ and $\nu$ is the interior normal to $\partial \O $ at $x_0$.
 This implies   $\tilde u_n^i > 0$ on $\Omega$, for $n$ sufficiently large.\hfill$\Box$

\bigskip
 
\bigskip

\end{document}